\newtheorem{theorem}{Theorem}[section]
\newtheorem{lemma}[theorem]{Lemma}
\newtheorem{remark}[theorem]{Remark}
\numberwithin{equation}{section}
\let\mathcal=\mathscr
\def\L1#1{L^1(#1)}
\def\L#1#2{L^{#1}(#2)}
\def\lef({\left(}
\def\rig){\right)}
\def\nbx#1{\nabla\phi(x(t))}
\begin{document}
%\centerline{\textit{Second version: November 29, 2017}}
\title[Asymptotic for a second order evolution equation]
{Asymptotic for a second order evolution equation with vanishing damping term
and Tikhonov regularization}%
\author{Mounir Elloumi, Ramzi May and Chokri Mnasri}%
\address{Mathematics Department, College of Science, King Faisal University, P.O. 380, Ahsaa 31982, Kingdom of Saudi Arabia}
\address{Mathematics Department, College of Science, Sfax University, Sfax 3042, Tunisia}
\email{melloumi@kfu.edu.sa}
\address{Mathematics Department, College of Science, King Faisal University, P.O. 380, Ahsaa 31982, Kingdom of Saudi Arabia}
\address{Mathematics Department, College of Science of Bizerte, Carthage University, Bizerte, Tunisia}
\email{rmay@kfu.edu.sa}
\address{Mathematics Department, College of Science, King Faisal University, P.O. 380, Ahsaa 31982, Kingdom of Saudi Arabia}
\email{cmnasri@kfu.edu.sa}
\thanks{This work is supported by the Deanship of Scientific Research at King Faisal University under the Project 170065​}
\subjclass{34E10, 34G05, 35B40, 35L70, 58L25.} \vskip 0.2cm
%\thanks{\textit{AMS Subject   classification: 34A12; 34A40; 34Dxx; 47N10}}
\keywords{dynamical systems, Tikhonov regularisation,
asymptotically small dissipation, asymptotic behavior, energy function, convex
function, convex optimization.}

\vskip 0.2cm
\date{}
\dedicatory{}
% ----------------------------------------------------------------
\begin{abstract}
Let $\mathcal{H}$\ be a real Hilbert space. We
investigate the long time behavior of the trajectories $x(.)$ of the vanishing
damped nonlinear dynamical system with Tikhonov regularizing term%
\begin{equation}
x^{\prime\prime}(t)+\gamma(t)x^{\prime}(t)+\nabla\Phi(x(t))+\varepsilon
(t)\nabla U(x(t))=0, \tag{GAVD$_\gamma,\varepsilon$}%
\end{equation}
where $\Phi,U:\mathcal{H}\rightarrow\mathbb{R}$ are two convex continuously
differentiable functions, $\varepsilon(.)$ is a decreasing function satisfying
$\displaystyle\lim_{t\rightarrow+\infty}\varepsilon(t)=0,$ and $\gamma(.)$ is a nonnegative
function which behaves, for $t$ large enough, like $\displaystyle\frac{K}{t^{\theta}}$
where $K>0$ and $0\leq\theta\leq1.$ The main contribution of this paper is the
following control result: If $\displaystyle\int_{0}^{+\infty}\frac{\varepsilon(t)}%
{\gamma(t)}dt=+\infty,$ $U$ \ is strongly convex and its unique minimizer
$x^{\ast}$ is also a minimizer of $\Phi$ then every trajectory $x(.)$ of
(GAVD$_{\gamma,\varepsilon}$) converges strongly to $x^{\ast}$ and the rate of
convergence of its energy function $$W(t)=\frac{1}{2}\left\Vert x^{\prime
}(t)\right\Vert ^{2}+\Phi(x(t))-\min\Phi$$
is of order to $\circ(1/t^{1+\theta})$. Moreover, we prove a new result concerning the weak convergence of the trajectories
of (GAVD$_{\gamma,\varepsilon}$) to a common minimizer of $\Phi$ and $U$ (if
one exists) under a simple condition on the speed of decay of the Tikhonov
factor $\varepsilon(t)$ to $0$ with respect to $\gamma(t).$
\end{abstract}
\maketitle
% ----------------------------------------------------------------

\section{Introduction and statement of the main results}
Let $\mathcal{H}$ be a real Hilbert space endowed with the inner product
$\langle.,.\rangle$ and the associated norm $\left\Vert .\right\Vert .$ Let
$\Phi,U:\mathcal{H}\rightarrow\mathbb{R}$ be two convex continuously differentiable
functions and $\gamma,\varepsilon$ be two real positive functions defined on a
fixed time interval $[t_{0},+\infty)$ for some $t_{0}>0.$ Motivated by the recent work \cite{ACR} of Attouch, Chbani, and Riahi on the asymptotic
behavior of the trajectories of the asymptotic vanishing damping dynamical
system with regularizing Tikhonov term%

\begin{equation}
x^{\prime\prime}(t)+\frac{\alpha}{t}x^{\prime}(t)+\nabla\Phi(x(t))+\varepsilon
(t)x(t)=0, \tag{AVD$_\alpha,\varepsilon$}%
\end{equation}
we investigate in this paper the long time behavior, as $t\rightarrow+\infty,$
of the trajectories of the following generalized version of the
(AVD$_{\alpha,\varepsilon}$) dynamical system%
\begin{equation}
x^{\prime\prime}(t)+\gamma(t)x^{\prime}(t)+\nabla\Phi(x(t))+\varepsilon
(t)\nabla U(x(t))=0. \tag{GAVD$_\gamma,\varepsilon$}%
\end{equation}
For the importance and the applications of these two dynamical systems and many
other related dynamical systems in Mechanics and Optimization, we refer the
reader to \cite{ACa}, \cite{AC}, \cite{AGR}, \cite{SBC} and references therein.

Throughout this paper, we assume the following general hypothesis:

\begin{description}
\item[H$_{1}$] The functions $\Phi,U:\mathcal{H}\rightarrow\mathbb{R}$ are
convex, differentiable, and bounded from below. We set $\displaystyle\Phi^{\ast}=\inf
_{x\in\mathcal{H}}\Phi(x)$ and $\displaystyle U^{\ast}=\inf_{x\in\mathcal{H}}U(x).$

\item[H$_{2}$] The set $S_{\Phi}:=\mbox{argmin}\Phi=\{z\in\mathcal{H}:~\Phi(z)=\Phi^{\ast}\}$
is not empty.

\item[H$_{3}$] The gradient functions $\nabla\Phi$ and $\nabla U$ of $\Phi$
and $U$ are Lipschitz on bounded subsets of $\mathcal{H}.$

\item[H$_{4}$] The function $\displaystyle\gamma:[t_{0},+\infty)\rightarrow
(0,+\infty)$ is absolutely continuous and satisfies the following
property: there exist $t_{1}\geq t_{0}$ and two real constants $K_{1},K_{2}>0
$ such that $$\displaystyle\gamma(t)\geq\frac{K_{1}}{t} \mbox{ and } \displaystyle\gamma^{\prime}(t)\leq
\frac{K_{2}}{t^{2}}$$ for almost every $\displaystyle t\geq t_{1}.$

\item[H$_{5}$] The function $\displaystyle\varepsilon:[t_{0},+\infty)\rightarrow(0,+\infty)$
is absolutely continuous, nonincreasing function and satisfies
$$\displaystyle\lim_{t\rightarrow+\infty}\varepsilon(t)=0.$$
\end{description}

Proceeding as in the proof of [Theorem 3.1, \cite{AGR}] and using the
classical Cauchy-Lipschitz and the energy function
\begin{equation}
W(t)=\frac{1}{2}\left\Vert x^{\prime}(t)\right\Vert ^{2}+\Phi(x(t))-\Phi
^{\ast}+\varepsilon(t)(U(x(t))-U^{\ast}), \label{energy}%
\end{equation}
one can easily prove that for every initial data $(x_{0},v_{0})\in
\mathcal{H}\times\mathcal{H},$ the dynamical system (GAVD$_{\gamma
,\varepsilon}$) has a unique solution $x(.)\in C^{2}(t_{0},+\infty;\mathcal{H})$
which satisfies $x(t_{0})=x_{0}$ and $x^{\prime}(t_{0})=v_{0}.$ Therefore, we assume in what follows that $x(.)$ is a classical global solution of
(GAVD$_{\gamma,\varepsilon}$) and we focus our attention on the study of the
long time behavior of $x(t)$ as $t$ goes to infinity. Before setting the
contributions of this work in this direction, let us first recall some well
known results on the asymptotic behavior of solutions of a variant dynamical
systems related to (GAVD$_{\gamma,\varepsilon}$). In the pioneer work
\cite{Al}, Avarez considered the case where $\gamma(.)$ is constant and $\varepsilon
=0.$ He established that the trajectory $x(t)$ converges weakly to some
element $\bar{x}$ of $S_\Phi$. In this case, the rate of convergence of
$\Phi(x(t))$ to $\Phi^{\ast}$ is of order $\circ(1/t)$ (see \cite{ACa})$.$To
overcome the drawback of the weak convergence to a non identified minimizer of
$\Phi$, Attouch and Cazerniki \cite{AC} proved that, up to adding a Tikhonov
regularizing term $\varepsilon(t)x(t)$ with $\displaystyle\int_{t_{0}}^{+\infty}%
\varepsilon(t)dt=+\infty,$ any trajectory $x(t)$ of the system
\begin{equation}
x^{\prime\prime}(t)+\gamma x^{\prime}(t)+\nabla\Phi(x(t))+\varepsilon(t)x(t)=0
\label{AC}%
\end{equation}
converges strongly to the element $x^{\ast}$ of minimum norm of the set
$S_\Phi.$ Using a different approach, Jendoubi and May \cite{JM10}
proved that this strong convergence result remains true even a perturbation
integrable term $g(t)$ is added to the equation (\ref{AC})$.$ In other
direction, in order to improve the rate of convergence of $\Phi(x(t))$ to
$\Phi^{\ast},$ Su, Boyd, and Candes \cite{SBC}\ introduced the dynamical
system which is the continuous version of the Nestrov's accelerated
minimization method \cite{Ne}%
\begin{equation}
x^{\prime\prime}(t)+\frac{\alpha}{t}x^{\prime}(t)+\nabla\Phi(x(t))=0.
\label{Nest}%
\end{equation}
They proved that if $\alpha\geq3$ then $$\Phi(x(t))-\Phi^{\ast}=O(1/t^{2}).$$
This result was later improved in \cite{ACPR} and \cite{Ma17}. In fact it was
proved that if $\alpha>3$ then $x(t)$ of converges weakly to some element
$\bar{x}$ of $S_\Phi$ and that $$\Phi(x(t))-\Phi^{\ast}=\circ(1/t^{2}).$$

Recently, in order to benefit at the same time of the quick minimization
property $\displaystyle\Phi(x(t))-\Phi^{\ast}=\circ(1/t^{2})$ due to the presence of the
vanishing damping term $\displaystyle\gamma(t)=\frac{\alpha}{t}$ in (\ref{Nest}) and the
strong convergence of the trajectories of (\ref{AC}) to a particular minimizer
of $\Phi$ which is a consequence of the regularizing Tikhonov term
$\varepsilon(t)x(t),$ Attouch, Chbani, and Riahi \cite{ACR} have considered
the dynamical system (AVD$_{\alpha,\varepsilon}$) and they have established
some properties of the asymptotic behavior of its trajectories which we can
summarize in the following theorem.

\begin{theorem}
[Attouch, Chbani and Riahi]\label{ThACR}Let $x\in C^{2}(t_{0},+\infty
;\mathcal{H})$ be a solution of \ (AVD$_{\alpha,\varepsilon}$).

\begin{description}
\item[A] If $\alpha>1$ and $\displaystyle\int_{t_{0}}^{+\infty}\frac{\varepsilon(t)}%
{t}dt<+\infty,$ then $\displaystyle\int_{t_{0}}^{+\infty}\frac{\left\Vert x^{\prime
}(t)\right\Vert ^{2}\text{\thinspace\thinspace}}{t}dt<+\infty,$ $\displaystyle\lim
_{t\rightarrow+\infty}x^{\prime}(t)=0$ and $\displaystyle\lim_{t\rightarrow+\infty}%
\Phi(x(t))=\Phi^{\ast}.$

\item[B] If $\alpha>3$ and $\displaystyle\int_{t_{0}}^{+\infty}t\varepsilon(t)dt<+\infty,$
then $x(t)$ converges weakly to some element of $S_\Phi$.\\Furthermore, the
associated energy function $\displaystyle W(t)=\frac{1}{2}\left\Vert x^{\prime
}(t)\right\Vert ^{2}+\Phi(x(t))-\Phi^{\ast}$ satisfies $\displaystyle W(t)=\circ(1/t^{2})$
and $\displaystyle\int_{t_{0}}^{+\infty}tW(t)dt<+\infty.$

\item[C] If the function $\varepsilon$ satisfies moreover one of the following hypothesis

\begin{description}
\item[H$_{5a}$] $\displaystyle\lim_{t\rightarrow+\infty}t^{2}\varepsilon(t)=+\infty$ if
$\alpha=3$

\item[H$_{5b}$] $\displaystyle t^{2}\varepsilon(t)\geq c>\frac{4}{9}\alpha(\alpha-3)$ if
$\alpha>3$

\item[H$_{5c}$] $\displaystyle\int_{t_{0}}^{+\infty}\frac{\varepsilon(t)}{t}dt=+\infty$
\end{description}
\end{description}
then $\displaystyle\liminf_{t\rightarrow+\infty}\left\Vert x(t)-x^{\ast}\right\Vert =0$
where $x^{\ast}$ is the element of minimal norm of the set $S_\Phi.$
\end{theorem}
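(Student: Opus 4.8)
The plan is to run a Lyapunov analysis in the spirit of Su--Boyd--Candès \cite{SBC} and its refinements \cite{ACPR}, \cite{Ma17}, carefully tracking the extra terms produced by the vanishing Tikhonov perturbation $\varepsilon(t)x(t)$. Note first that here $\gamma(t)=\alpha/t$ and, since the regularizer is $\tfrac12\|x\|^{2}$, one has $\nabla U=\mathrm{Id}$ and $U^{\ast}=0$, so the energy \eqref{energy} reads $W(t)=\tfrac12\|x'(t)\|^{2}+\Phi(x(t))-\Phi^{\ast}+\tfrac{\varepsilon(t)}{2}\|x(t)\|^{2}$. Differentiating $W$ along a solution and substituting $x''$ from the equation, the cross terms $\langle\nabla\Phi(x),x'\rangle$ and $\varepsilon\langle x,x'\rangle$ cancel and one gets $W'(t)=-\tfrac{\alpha}{t}\|x'(t)\|^{2}+\tfrac{\varepsilon'(t)}{2}\|x(t)\|^{2}\le 0$ because $\varepsilon$ is nonincreasing. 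Hence $W$ is nonincreasing, so $\Phi(x(\cdot))-\Phi^{\ast}$, $\|x'(\cdot)\|$ and $\varepsilon(\cdot)\|x(\cdot)\|^{2}$ are bounded, $W(t)\downarrow W_{\infty}\ge 0$, and $\int_{t_{0}}^{\infty}\tfrac{\|x'(t)\|^{2}}{t}\,dt<\infty$. This estimate is the backbone of all three parts.

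For item (A), I would introduce the anchored function $h(t)=\tfrac12\|x(t)-z\|^{2}$ with $z\in S_{\Phi}$. Using the equation, the convexity inequality $\langle x-z,\nabla\Phi(x)\rangle\ge\Phi(x)-\Phi^{\ast}$ and the elementary bound $\langle x-z,x\rangle\ge-\tfrac14\|z\|^{2}$ one gets
\[
h''(t)+\frac{\alpha}{t}\,h'(t)+\bigl(\Phi(x(t))-\Phi^{\ast}\bigr)\ \le\ \|x'(t)\|^{2}+\tfrac14\|z\|^{2}\,\varepsilon(t).
\]
Since $\alpha>1$, $\int\|x'\|^{2}/t\,dt<\infty$ and $\int\varepsilon(t)/t\,dt<\infty$, a now-standard integration argument (divide by $t$, integrate by parts against the appropriate kernel, and use the nonnegativity of $h$) shows that $\lim_{t}\|x(t)-z\|$ exists — hence the trajectory is bounded — and that $\int_{t_{0}}^{\infty}\tfrac{\Phi(x(t))-\Phi^{\ast}}{t}\,dt<\infty$. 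Combining this with the monotonicity of $W$ and the local Lipschitz hypothesis \textbf{H}$_{3}$ (which makes $\|x''\|$ bounded on bounded time windows) yields $x'(t)\to 0$; and since $W(t)\to W_{\infty}$ while the Tikhonov term tends to $0$ and $\liminf(\Phi(x(t))-\Phi^{\ast})=0$ (from the last integral bound), we conclude $W_{\infty}=0$ and $\Phi(x(t))\to\Phi^{\ast}$.

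For item (B), with $\alpha>3$ and $\int t\varepsilon<\infty$, I would use the time-weighted Lyapunov function
\[
\mathcal{E}(t)=t^{2}\Bigl(\Phi(x(t))-\Phi^{\ast}+\frac{\varepsilon(t)}{2}\|x(t)\|^{2}\Bigr)+\frac12\bigl\|(\alpha-1)(x(t)-z)+t\,x'(t)\bigr\|^{2},\qquad z\in S_{\Phi}.
\]
Using $t\,x''(t)=-\alpha x'(t)-t\nabla\Phi(x(t))-t\varepsilon(t)x(t)$, the quadratic part produces $-t(\alpha-1)\langle x-z,\nabla\Phi(x)\rangle$ plus Tikhonov terms (one of which cancels the $\langle x,x'\rangle$ contribution coming from differentiating the regularized potential), and convexity of $\Phi$ together with Young's inequality leaves $\mathcal{E}'(t)\le-(\alpha-3)\,t\,(\Phi(x(t))-\Phi^{\ast})+C\,t\,\varepsilon(t)$ with $C=C(\alpha,\|z\|)$. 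Since $\int t\varepsilon<\infty$, $\mathcal{E}$ converges and $\int_{t_{0}}^{\infty}t\,(\Phi(x(t))-\Phi^{\ast})\,dt<\infty$; boundedness of $\mathcal{E}$ also forces the trajectory to be bounded, whence $W(t)=O(1/t^{2})$ and $\int_{t_{0}}^{\infty}tW(t)\,dt<\infty$, and the standard sharpening from \cite{ACPR}, \cite{Ma17} (from $\mathcal{E}$ convergent and $\int t(\Phi-\Phi^{\ast})<\infty$, its limit is forced to be $0$) gives $W(t)=\circ(1/t^{2})$. Weak convergence then follows from Opial's lemma: $\lim_{t}\|x(t)-z\|$ exists for every $z\in S_{\Phi}$ (from the item-(A) inequality together with $\int tW<\infty$ and $\int t\varepsilon<\infty$), and every weak sequential cluster point of $x(t)$ lies in $S_{\Phi}$ because $\Phi(x(t))\to\Phi^{\ast}$ and $\Phi$ is weakly lower semicontinuous.

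Item (C) is where the real work lies, and I expect it to be the main obstacle. The mechanism is that the Tikhonov term \emph{selects} the minimal-norm point: let $x_{\varepsilon}$ be the unique minimizer of the strongly convex function $y\mapsto\Phi(y)+\tfrac{\varepsilon}{2}\|y\|^{2}$, so that $\nabla\Phi(x_{\varepsilon})+\varepsilon x_{\varepsilon}=0$, $\|x_{\varepsilon}\|\le\|x^{\ast}\|$, $x_{\varepsilon}\to x^{\ast}$ as $\varepsilon\downarrow 0$, and $\varepsilon\mapsto x_{\varepsilon}$ is locally Lipschitz with $\|\tfrac{d}{d\varepsilon}x_{\varepsilon}\|\le\|x_{\varepsilon}\|/\varepsilon$. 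I would then study $g(t)=\tfrac12\|x(t)-x_{\varepsilon(t)}\|^{2}$: using the equation, the $\varepsilon(t)$-strong monotonicity of $\nabla\Phi+\varepsilon(t)\mathrm{Id}$, and the optimality condition, one gets a differential inequality of the form $g''(t)+\tfrac{\alpha}{t}g'(t)+\varepsilon(t)\|x(t)-x_{\varepsilon(t)}\|^{2}\le\|x'(t)\|^{2}+r(t)$, where $r(t)$ collects the terms created by the $t$-dependence of $x_{\varepsilon(t)}$ and is controlled, via $\|\tfrac{d}{d\varepsilon}x_{\varepsilon}\|\le\|x^{\ast}\|/\varepsilon$, by quantities involving $|\varepsilon'(t)|/\varepsilon(t)$. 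Arguing by contradiction (assume $\liminf_{t}\|x(t)-x_{\varepsilon(t)}\|\ge\delta>0$) and integrating this inequality against a suitable weight, the goal is to produce a bound $\delta^{2}\int(\text{weight})\,\varepsilon<\infty$ contradicting the relevant divergence: directly $\int\varepsilon/t=\infty$ under \textbf{H}$_{5c}$, and under \textbf{H}$_{5a}$/\textbf{H}$_{5b}$ one first passes to a $t^{2}$-weighted (regularized-energy) Lyapunov function, the conditions $t^{2}\varepsilon(t)\to+\infty$, resp. $t^{2}\varepsilon(t)\ge c>\tfrac49\alpha(\alpha-3)$, being precisely what keeps the relevant quadratic form nonnegative along the computation. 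Since $x_{\varepsilon(t)}\to x^{\ast}$, the conclusion $\liminf_{t}\|x(t)-x^{\ast}\|=0$ follows. The hard part is the bookkeeping of the $x_{\varepsilon(t)}$-derivative terms and checking, case by case, that the error terms stay summable while the regularization budget genuinely diverges.
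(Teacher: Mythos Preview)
This theorem is stated in the paper as a summary of prior results of Attouch, Chbani, and Riahi \cite{ACR}; the present paper does \emph{not} prove Theorem~\ref{ThACR} itself, so there is no proof here to compare your attempt against. The paper's own contribution is rather to extend parts (A) and (B) to the general system (GAVD$_{\gamma,\varepsilon}$) via Theorems~\ref{General} and~\ref{Weak}, and to replace part (C) by the genuinely different strong-convergence statement of Theorem~\ref{Strong}.

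That said, it is worth contrasting your sketch with the paper's proofs of those generalizations. For (A), your anchored-function argument is essentially the one behind Theorem~\ref{General}: the paper also differentiates $W$, obtains $\int\gamma\|x'\|^{2}<\infty$, and then uses $h_{v}(t)=\tfrac12\|x(t)-v\|^{2}$ together with convexity to squeeze $W_{\infty}\le\Phi(v)-\Phi^{\ast}$ for every $v$. For (B), however, the approaches diverge. You use the Su--Boyd--Cand\`es mixed Lyapunov function $\mathcal{E}(t)=t^{2}(\Phi(x)-\Phi^{\ast}+\tfrac{\varepsilon}{2}\|x\|^{2})+\tfrac12\|(\alpha-1)(x-z)+tx'\|^{2}$, which is the route taken in \cite{ACR} and is specific to $\gamma(t)=\alpha/t$. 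The present paper instead avoids $\mathcal{E}$ entirely: in Lemma~\ref{Lemma} it multiplies the pure energy inequality $(\lambda W)'\le\lambda'W-\lambda\gamma\|x'\|^{2}$ by $\lambda(t)=t^{1+\theta}$, combines it with the $h_{v}$ inequality, and closes via Gronwall and Opial. This buys a uniform treatment of all dampings $\gamma(t)\sim\alpha/t^{\theta}$, $0\le\theta\le1$, at the cost of a slightly more delicate bookkeeping of the constants $A(\theta),B(\theta,t),\mu(\theta)$. For (C), your Tikhonov-curve idea (tracking $g(t)=\tfrac12\|x(t)-x_{\varepsilon(t)}\|^{2}$ and arguing by contradiction) is indeed the mechanism in \cite{ACR}; the present paper does not reproduce it, and its Theorem~\ref{Strong} is a different statement (actual strong convergence, not just $\liminf=0$, under the extra hypothesis that $\arg\min U\subset S_{\Phi}$), proved by a related but distinct contradiction argument on $h_{p^{\ast}}$ rather than on $g$.
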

In this paper, we improve and extend these results to the general dynamical system
(GAVD$_{\gamma,\varepsilon}$). Moreover, we discover some new asymptotic
properties of the trajectories of (GAVD$_{\gamma,\varepsilon}$).

Our first result is a general minimization property of (GAVD$_{\gamma
,\varepsilon}$) which is a slight improvement of the assertion (A) in the
previous theorem.

\begin{theorem}
[A general minimization property of (GAVD$_{\gamma,\varepsilon}$%
)]\label{General}Let $x(.)$ be a classical solution of (GAVD$_{\gamma
,\varepsilon}$). Then $\displaystyle\int_{t_{0}}^{+\infty}\gamma(t)\left\Vert x^{\prime
}(t)\right\Vert ^{2}dt<+\infty,$ and the energy function $W(t),$ defined by
(\ref{energy}), converges to $0$ as $t\rightarrow+\infty$. In particular
$\displaystyle\lim_{t\rightarrow+\infty}x^{\prime}(t)=0$ and $\displaystyle\lim_{t\rightarrow+\infty
}\Phi(x(t))=\Phi^{\ast}.$
\end{theorem}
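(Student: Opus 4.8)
The plan is to analyze the time-derivative of the energy function $W(t)$ defined in \eqref{energy} and show it is (up to integrable errors) nonincreasing, so that $W$ has a limit; then upgrade "$W$ has a limit" to "$W\to 0$" using the integrability of $\gamma(t)\|x'(t)\|^2$.

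First I would differentiate $W$. Using the equation (GAVD$_{\gamma,\varepsilon}$) to substitute $x''(t)=-\gamma(t)x'(t)-\nabla\Phi(x(t))-\varepsilon(t)\nabla U(x(t))$, one gets
\begin{align*}
W'(t)&=\langle x''(t),x'(t)\rangle+\langle\nabla\Phi(x(t)),x'(t)\rangle+\varepsilon'(t)\,(U(x(t))-U^{\ast})+\varepsilon(t)\langle\nabla U(x(t)),x'(t)\rangle\\
&=-\gamma(t)\|x'(t)\|^2+\varepsilon'(t)\,(U(x(t))-U^{\ast}).
\end{align*}
Since $\varepsilon$ is nonincreasing (H$_5$) and $U\geq U^{\ast}$ (so $U(x(t))-U^{\ast}\geq 0$ by H$_1$), the second term is $\leq 0$; hence $W'(t)\leq -\gamma(t)\|x'(t)\|^2\leq 0$. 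Thus $W$ is nonincreasing and bounded below by $0$ (because $W(t)\geq \Phi(x(t))-\Phi^{\ast}\geq 0$, using $\varepsilon(t)\geq 0$ and $U(x(t))-U^{\ast}\geq 0$), so $W(t)$ converges to some $\ell\geq 0$ as $t\to+\infty$. Integrating the inequality $W'(t)\leq -\gamma(t)\|x'(t)\|^2$ on $[t_0,+\infty)$ and using that $W$ is bounded gives $\int_{t_0}^{+\infty}\gamma(t)\|x'(t)\|^2\,dt\leq W(t_0)-\ell<+\infty$, which is the first assertion.

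The main obstacle is showing $\ell=0$, i.e. ruling out $\ell>0$. Here I would combine the integrability $\int_{t_0}^{+\infty}\gamma(t)\|x'(t)\|^2\,dt<+\infty$ with the lower bound $\gamma(t)\geq K_1/t$ from H$_4$: this yields $\int_{t_0}^{+\infty}\frac{\|x'(t)\|^2}{t}\,dt<+\infty$. Since $\int^{+\infty}dt/t=+\infty$, this forces $\liminf_{t\to+\infty}\|x'(t)\|=0$. Next I would argue that $\Phi(x(t))-\Phi^{\ast}$ also has a limit along the whole trajectory: indeed $W(t)\to\ell$, $\varepsilon(t)\to 0$, and $U(x(t))$ — one needs $U(x(t))$ to stay bounded (which follows since $W$ bounded forces $\varepsilon(t)(U(x(t))-U^\ast)$ bounded, but near $t$ where $\varepsilon(t)$ is tiny this is not immediate; more carefully, $\Phi(x(t))-\Phi^\ast\le W(t)$ is bounded, and if the trajectory were unbounded one would need a separate coercivity-free argument). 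The clean route is: from $\liminf\|x'(t)\|=0$ pick $t_n\to\infty$ with $x'(t_n)\to 0$; along this sequence $\tfrac12\|x'(t_n)\|^2\to 0$ and $\varepsilon(t_n)(U(x(t_n))-U^\ast)\to 0$ provided $U(x(t_n))$ is bounded, giving $\Phi(x(t_n))-\Phi^{\ast}\to\ell$. Then use the second derivative estimate / the equation to control oscillations of $h(t):=\tfrac12\|x'(t)\|^2$, or invoke a Lemma (of the type "if $W\to\ell$, $W'\le -\gamma\|x'\|^2$, and $\int\gamma\|x'\|^2<\infty$, then combined with $\int \varepsilon'(U-U^\ast)$ absolutely convergent one deduces $\Phi(x(t))\to\Phi^*$") to conclude $\ell = \lim_\Phi$, and finally that $\ell$ must equal the $\liminf$ value, namely $0$ is forced because $W(t)\to\ell$ and along $t_n$ we get $W(t_n)\to 0 + (\lim\Phi - \Phi^*) + 0$; matching with the monotone limit pins down $\ell$ and, using $W\geq 0$ together with a standard argument that $\Phi(x(t))-\Phi^*\to\ell$ cannot be positive while $\liminf\|x'\|^2=0$ and the system is driven toward $S_\Phi$, gives $\ell=0$. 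Once $W(t)\to 0$, the last two assertions are immediate: $\tfrac12\|x'(t)\|^2\le W(t)\to 0$ and $0\le\Phi(x(t))-\Phi^{\ast}\le W(t)\to 0$.

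I expect the delicate point to be justifying boundedness of $U(x(t))$ (equivalently of the trajectory on sets where $\varepsilon$ is small) and rigorously passing from $\liminf\|x'(t)\|=0$ to $W(t)\to 0$ rather than merely $\liminf W(t)$-type information; this is presumably handled in the paper by an auxiliary lemma controlling $\frac{d}{dt}\|x'(t)\|^2$ via H$_3$ (Lipschitz-on-bounded-sets of $\nabla\Phi,\nabla U$) once the trajectory is shown to be bounded, the boundedness itself following from $W$ being nonincreasing.
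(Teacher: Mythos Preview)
Your derivation of $W'(t)=-\gamma(t)\|x'(t)\|^2+\varepsilon'(t)(U(x(t))-U^*)\le -\gamma(t)\|x'(t)\|^2$, the existence of $\ell:=\lim_{t\to\infty}W(t)\ge 0$, and the integrability $\int_{t_0}^{+\infty}\gamma(t)\|x'(t)\|^2\,dt<\infty$ are all correct and coincide with the paper's opening steps.

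The gap is in the passage from $\ell\ge 0$ to $\ell=0$. Your argument reduces to: pick $t_n$ with $x'(t_n)\to 0$, hope that $\varepsilon(t_n)(U(x(t_n))-U^*)\to 0$, conclude $\Phi(x(t_n))-\Phi^*\to\ell$, and then assert that $\ell>0$ is impossible ``because the system is driven toward $S_\Phi$''. Two problems. First, you never establish that the trajectory (or even the subsequence $x(t_n)$) is bounded; your claim that boundedness ``follows from $W$ being nonincreasing'' is false, since $\Phi$ is not assumed coercive --- boundedness of $\Phi(x(t))$ does not bound $x(t)$. Without this you cannot control $\varepsilon(t_n)(U(x(t_n))-U^*)$, nor invoke H$_3$. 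Second, even granting $\Phi(x(t_n))-\Phi^*\to\ell$, nothing in your outline forces $\ell=0$: there is no ``standard argument'' that $\Phi(x(t))-\Phi^*$ cannot have a positive limit while $\liminf\|x'\|=0$. That implication is essentially the content of the theorem.

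The paper's route is different and avoids boundedness entirely. For an \emph{arbitrary} $v\in\mathcal H$ one sets $h_v(t)=\tfrac12\|x(t)-v\|^2$ and derives, using the equation and convexity of $\Phi,U$,
\[
h_v''(t)+\gamma(t)h_v'(t)\le \tfrac32\|x'(t)\|^2 - W(t) + \Phi(v)-\Phi^* + \varepsilon(t)(U(v)-U^*).
\]
Integrating on $[t_0,t]$, using $\gamma'(t)\le K_2/t^2$ (the part of H$_4$ you never invoked), and the growth estimates $\int_{t_0}^t\|x'(s)\|^2\,ds=o(t)$ and $\|x(t)\|=o(t)$ (both consequences of $\int\|x'\|^2/s\,ds<\infty$), one divides by $t$ and lets $t\to\infty$ to obtain $W_\infty\le\Phi(v)-\Phi^*$. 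Taking the infimum over $v\in\mathcal H$ gives $W_\infty\le 0$. The key idea you are missing is this comparison with an arbitrary test point $v$ via $h_v$; this is what replaces the unavailable boundedness of the trajectory.
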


Our second main result concerns the weak convergence property of the
trajectories of (GAVD$_{\gamma,\varepsilon}$). The first part of this result
is similar to the assertion B in Theorem \ref{ThACR}. \ Our proof, which
is different from the arguments given by Attouch, Chbani, and Riahi [Theorem
3.1, \cite{ACR}], provide an other confirmation of the fact, noticed
recently in many works as \cite{ACR}, \cite{ACa}, \cite{Ma17} and \cite{SBC},
that the value $\alpha=3$ in the the system (\ref{Nest}) is critical and
somehow mysterious. The second part of the theorem is a simple result on the
weak convergence to a common minimizer of the two convex functions $\Phi$ and
$U$ which, at our knowledge, is not known even in the case where the damping
term $\gamma$ is constant. A comparable result was proved by Cabot(see [Proposition 2.5, \cite{Ca}]) for the
first order system $x^{\prime}(t)+\nabla\Phi(x(t))+\varepsilon(t)\nabla
U(x(t))=0$.

\begin{theorem}
[Weak convergence properties of (GAVD$_{\gamma,\varepsilon}$)]\label{Weak} Assume that there exist $t_{1}\geq t_{0},~0\leq\theta\leq1,\alpha>0$ with
$\alpha>3$ if $\theta=1$ such that%
\begin{equation}
\gamma(t)\geq\frac{\alpha}{t^{\theta}}\text{ for every }t\geq t_{1}\text{ and
}\int_{t_{0}}^{+\infty}\left[  \left(  t^{\theta}\gamma(t)\right)  ^{\prime
}\right]  ^{+}dt<+\infty\label{assump}%
\end{equation}
where $\left[  \left(  t^{\theta}\gamma(t)\right)  ^{\prime}\right]  ^{+}%
=\max\{0,\left(  t^{\theta}\gamma(t)\right)  ^{\prime}\}.$ Let $x(.)$ be a
classical solution of (GAVD$_{\gamma,\varepsilon}$). Then the two following
properties hold:

\begin{description}
\item[P$_{1}$] If $\displaystyle\int_{t_{0}}^{+\infty}t^{\theta}\varepsilon(t)dt<+\infty$
then $x(t)$ converges weakly to some element of $S_\Phi.$

\item[P$_{2}$] If $S_\Phi\cap S_U\neq\varnothing$ and $\displaystyle\liminf_{t\rightarrow+\infty}t^{1+\theta}\varepsilon(t)>0~$then $x(t)$ converges
weakly to some element of $S_\Phi\cap S_U.$
\end{description}
Moreover, in both case, the energy function $W$ satisfies
\begin{equation}
W(t)=\circ(1/t^{1+\theta})\text{ and }\int_{t_{0}}^{+\infty}t^{\theta
}W(t)dt<+\infty. \label{rateofconvergence}%
\end{equation}
\end{theorem}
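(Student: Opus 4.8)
The plan is to derive \eqref{rateofconvergence} and the weak convergence of $x(\cdot)$ from a Lyapunov analysis followed by Opial's lemma, treating the assertions P$_{1}$ and P$_{2}$ in parallel but with different anchor points. \emph{Preliminary identities.} Differentiating the energy \eqref{energy} along a solution of (GAVD$_{\gamma,\varepsilon}$) and using the equation gives
\[
W'(t)=-\gamma(t)\left\Vert x'(t)\right\Vert^{2}+\varepsilon'(t)\bigl(U(x(t))-U^{\ast}\bigr)\le 0,
\]
so $W$ is nonincreasing; by Theorem \ref{General}, $W(t)\to 0$ and $\int_{t_{0}}^{\infty}\gamma\left\Vert x'\right\Vert^{2}dt<+\infty$. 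For $z\in\mathcal H$, the function $q_{z}=\tfrac12\left\Vert x(\cdot)-z\right\Vert^{2}$ satisfies
\[
q_{z}''(t)+\gamma(t)q_{z}'(t)=\left\Vert x'(t)\right\Vert^{2}-\langle\nabla\Phi(x(t)),x(t)-z\rangle-\varepsilon(t)\langle\nabla U(x(t)),x(t)-z\rangle ,
\]
and convexity yields $\langle\nabla\Phi(x),x-z\rangle\ge\Phi(x)-\Phi(z)$ and $\langle\nabla U(x),x-z\rangle\ge U(x)-U(z)$. I would take $z\in S_\Phi$ when proving P$_{1}$ (so the $\nabla U$ term becomes a perturbation $\le\varepsilon(U(z)-U^{\ast})$, integrable against $t^{\theta}dt$ by hypothesis) and $z\in S_\Phi\cap S_U$ when proving P$_{2}$ (so both inner products then produce genuinely helpful terms $-(\Phi(x)-\Phi^{\ast})$ and $-\varepsilon(U(x)-U^{\ast})$).

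\emph{The Lyapunov function (the crux).} Put $\Gamma(t)=t^{\theta}\gamma(t)$; by \eqref{assump}, $\Gamma\ge\alpha$ and $\Gamma$ has bounded upward variation, hence is bounded. Following the pattern of [Theorem 3.1, \cite{ACR}], I would work with
\[
\mathcal E(t)=t^{1+\theta}W(t)+t^{2\theta}\bigl(\Phi(x(t))-\Phi^{\ast}\bigr)+\tfrac12\left\Vert \rho(t)(x(t)-z)+t^{\theta}x'(t)\right\Vert^{2},\qquad \rho(t)=\Gamma(t)-\theta t^{\theta-1},
\]
the choice of $\rho$ being made so that, after differentiating the squared-norm term and substituting the equation, the $\left\Vert x'\right\Vert^{2}$ and $\langle x-z,x'\rangle$ contributions coming from $x''$ cancel, leaving $-\rho t^{\theta}\langle\nabla\Phi(x),x-z\rangle$ and $-\rho t^{\theta}\varepsilon\langle\nabla U(x),x-z\rangle$ (both handled by convexity as above) and a term $-t^{2\theta}\langle x',\nabla\Phi(x)\rangle$ which is exactly cancelled by the $t^{2\theta}\langle\nabla\Phi(x),x'\rangle$ produced by differentiating $t^{2\theta}(\Phi(x)-\Phi^{\ast})$. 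A computation should then give, for $t\ge t_{1}$,
\[
\mathcal E'(t)\le -c_{1}\,t\left\Vert x'(t)\right\Vert^{2}-c_{2}\,t^{\theta}\bigl(\Phi(x(t))-\Phi^{\ast}\bigr)-c_{3}\,t^{\theta}\varepsilon(t)\bigl(U(x(t))-U^{\ast}\bigr)+k(t),
\]
where $c_{1},c_{2},c_{3}>0$ and $k\in L^{1}(t_{1},\infty)$ collects the terms estimated by $\int[(t^{\theta}\gamma)']^{+}dt<+\infty$, by $\int\gamma\left\Vert x'\right\Vert^{2}dt<+\infty$, and — in case P$_{1}$, where the $U$-term has no definite sign — by $\int t^{\theta}\varepsilon\,dt<+\infty$ (so that in P$_{1}$ the last displayed negative term is instead absorbed into $k$). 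Here the hypothesis on $\alpha$ enters: positivity of the constants is automatic when $\theta<1$, because the genuine dissipation $t^{1+\theta}\gamma\ge\alpha t$ then dominates the parasitic $O(t^{\theta})$ and $O(t^{2\theta-1})$ terms, whereas at the critical value $\theta=1$ one really needs $\alpha>3$, the "$3$" arising from the mismatch between the $t^{1+\theta}$ weight on $W$ and the $t^{2\theta}$ weight attached to $\langle x',\nabla\Phi(x)\rangle$ together with the cross terms in $\langle x-z,x'\rangle$. I expect this constant-chasing, in particular the borderline $\theta=1$, to be the only genuinely delicate point; boundedness of $\mathcal E$ then follows by a Gronwall argument, using that $\mathcal E$ controls $\left\Vert x-z\right\Vert^{2}$ (since $\rho(t)\to\alpha>0$) and that $\int[(t^{\theta}\gamma)']^{+}dt<+\infty$.

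\emph{Conclusion.} Boundedness of $\mathcal E$ makes $x(\cdot)$ and $x'(\cdot)$ bounded and $t^{1+\theta}W(t)$ bounded; integrating the last inequality gives $\int_{t_{1}}^{\infty}t\left\Vert x'\right\Vert^{2}dt<+\infty$, $\int_{t_{1}}^{\infty}t^{\theta}(\Phi(x)-\Phi^{\ast})dt<+\infty$ and $\int_{t_{1}}^{\infty}t^{\theta}\varepsilon(U(x)-U^{\ast})dt<+\infty$, whence $\int_{t_{1}}^{\infty}t^{\theta}W(t)dt<+\infty$; since $W$ is nonincreasing, $\int_{t}^{2t}s^{\theta}W(s)ds\ge c\,t^{1+\theta}W(2t)$ and the left side tends to $0$, so $W(t)=\circ(1/t^{1+\theta})$, which is \eqref{rateofconvergence}. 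For the weak convergence I would apply Opial's lemma with $C=S_\Phi$ in case P$_{1}$ and $C=S_\Phi\cap S_U$ in case P$_{2}$. For $z\in C$, the identity for $q_{z}$ gives $q_{z}''+\gamma q_{z}'\le\psi_{z}$ with $\psi_{z}=\left\Vert x'\right\Vert^{2}$ (case P$_{2}$) or $\psi_{z}=\left\Vert x'\right\Vert^{2}+\varepsilon(U(z)-U^{\ast})$ (case P$_{1}$), and $\psi_{z}\in L^{1}(t_{1},\infty)$ by the previous step; multiplying by $p(t)=\exp(\int_{t_{1}}^{t}\gamma)$, using $\int^{\infty}p(t)^{-1}dt<+\infty$ (valid since $\Gamma\ge\alpha$, recalling $\alpha>3$ when $\theta=1$), $p(s)\int_{s}^{\infty}p(t)^{-1}dt=O(s^{\theta})$ and $\int s^{\theta}\psi_{z}(s)ds<+\infty$, a Fubini estimate yields $\int_{t_{1}}^{\infty}[q_{z}'(t)]^{+}dt<+\infty$, so $\lim_{t\to\infty}\left\Vert x(t)-z\right\Vert$ exists for every $z\in C$. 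Finally, every weak sequential cluster point $\bar x$ of $x(t)$ lies in $C$: $\Phi(x(t))\to\Phi^{\ast}$ (Theorem \ref{General}) and weak lower semicontinuity of $\Phi$ force $\bar x\in S_\Phi$, and in case P$_{2}$, since $\varepsilon(t)(U(x(t))-U^{\ast})\le W(t)$ and $\liminf_{t\to\infty}t^{1+\theta}\varepsilon(t)>0$, one gets $U(x(t))-U^{\ast}\le C\,t^{1+\theta}W(t)\to 0$, so $\bar x\in S_U$ as well. Opial's lemma then gives that $x(t)$ converges weakly to an element of $C$, that is, P$_{1}$ and P$_{2}$.
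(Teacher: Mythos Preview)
Your overall architecture --- a Lyapunov estimate yielding \eqref{rateofconvergence} and boundedness, then Opial's lemma via the differential inequality for $q_z$ --- is exactly the paper's, and your ``Conclusion'' section (the Fubini argument showing $[q_z']^+\in L^1$, and the identification of cluster points) reproduces the paper's Lemma~\ref{lemma} and the proof of Theorem~\ref{Weak} almost verbatim.

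The gap is in the Lyapunov step. With your $\mathcal E$, the coefficient of $(\Phi(x)-\Phi^*)$ in $\mathcal E'$ works out to
\[
t^\theta\bigl[(1+\theta)+2\theta t^{\theta-1}-\rho(t)\bigr],
\]
the three contributions coming respectively from $(1+\theta)t^\theta W$, from differentiating $t^{2\theta}(\Phi(x)-\Phi^*)$, and from the convexity bound on $-\rho t^\theta\langle\nabla\Phi(x),x-z\rangle$. For this to be $\le -c_2 t^\theta$ you need $\rho(t)>1+\theta$ eventually, i.e.\ $\Gamma(t)>1+\theta+\theta t^{\theta-1}$, hence essentially $\alpha>1+\theta$. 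This is \emph{not} assumed when $\theta<1$ (only $\alpha>0$), and for $\theta=1$ your balance actually asks for $\alpha>5$, not $\alpha>3$. Your justification that ``the genuine dissipation $t^{1+\theta}\gamma\ge\alpha t$ dominates the parasitic $O(t^\theta)$ terms'' conflates two unrelated quantities: the dissipation controls $\|x'\|^2$, not $\Phi(x)-\Phi^*$. (There is a second loose end you did not mention: the term $-t^{2\theta}\varepsilon\langle\nabla U(x),x'\rangle$ coming from the squared norm is not cancelled by anything in your $\mathcal E$.) An \cite{ACR}-style Lyapunov can be made to work here, but not with the extra $t^{1+\theta}W$ summand and not without an additional $\xi\|x-z\|^2$ term and further tuning.

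The paper sidesteps this constant-matching entirely. It does not build a single Lyapunov; instead it combines two scalar inequalities. From the anchor identity one has, for $v\in S_\Phi$,
\[
h_v''+\gamma h_v'+W\le\tfrac32\|x'\|^2+[r_v]^+,
\]
and from $W'\le-\gamma\|x'\|^2$ one gets, with $\lambda(t)=t^{1+\theta}$,
\[
\tfrac32\,\lambda'\|x'\|^2\le\tfrac{3}{K}\,\lambda'W-\tfrac{3}{K}\,(\lambda W)'
\]
whenever $\gamma(t)\ge K/t$. The point is that for $\theta<1$ the hypothesis $\gamma\ge\alpha t^{-\theta}$ gives $t\gamma(t)\to+\infty$, so one may take $K$ as large as one likes (in particular $K>3$); for $\theta=1$ one takes $K=\alpha>3$. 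Multiplying the first inequality by $\lambda'$, substituting the second, and integrating gives an inequality bounding $(1-\tfrac3K)\int\lambda'W$ and $\tfrac3K\lambda W$ by a constant plus $\int g_\theta h_v$ with $g_\theta\in L^1$; Gronwall then closes. This route never needs $\alpha$ large when $\theta<1$, and never separates the $\Phi$- and $U$-parts of $W$, which is why it handles general $U$ without the stray $\langle\nabla U(x),x'\rangle$ term.
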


Our last mean result deals with the strong convergence of the trajectories of
(GAVD$_{\gamma,\varepsilon}$) to a minimizer of the function $U$ on the set of
minimizers of $\Phi$.

\begin{theorem}
[Strong convergence properties of (GAVD$_{\gamma,\varepsilon})$]%
\label{Strong}Assume that $U$ is strongly convex and $\gamma
(t)=\displaystyle\frac{\alpha}{t^{\theta}}$ with $\alpha>0$ if $0\leq\theta<1$ and
$\alpha>3$ if $\theta=1.$ Suppose in addition that $\displaystyle\int_{t_{0}}^{+\infty
}t^{\theta}\varepsilon(t)dt=+\infty.$ Let $x(.)$ be a classical solution of
(GAVD$_{\gamma,\varepsilon}$). Then the two following assertions hold:

\begin{description}
\item[Q$_{1}$] If $\displaystyle x^{\prime}(t)=\circ(1/t^{\theta})$ and $\displaystyle\int_{t_{0}%
}^{+\infty}t^{\theta}\left\Vert x^{\prime}(t)\right\Vert ^{2}dt<+\infty$ then
$x(t)$ converges strongly to the unique minimizer $p^{\ast}$ of $U$ on
$S_\Phi.$

\item[Q$_{2}$] If the unique minimizer $x^{\ast}$ of $U$ on $\mathcal{H}$ belongs to
$S_\Phi$ then $x(t)$ converges strongly to $x^{\ast}$ and the energy
function $W$ satisfies the asymptotic properties (\ref{rateofconvergence}).
\end{description}
\end{theorem}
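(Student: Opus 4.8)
\medskip
\noindent\textbf{Outline.} The plan is to combine an anchored function centred at the target point with the global energy $W$ from (\ref{energy}) and an auxiliary scalar differential inequality that converts the divergence condition into strong convergence. Write $\mu>0$ for the modulus of strong convexity of $U$; since $\gamma(t)=\alpha/t^{\theta}$, the hypothesis $\int_{t_{0}}^{+\infty}t^{\theta}\varepsilon(t)\,dt=+\infty$ is exactly $\int_{t_{0}}^{+\infty}\varepsilon(t)/\gamma(t)\,dt=+\infty$ (the ``persistent excitation'' of the Tikhonov term). Let the target point $p$ be $p^{\ast}$ in case \textbf{Q$_{1}$} (the unique minimizer of $U$ on $S_{\Phi}$, characterized by $\langle\nabla U(p^{\ast}),z-p^{\ast}\rangle\ge0$ for all $z\in S_{\Phi}$) and $p=x^{\ast}$ in case \textbf{Q$_{2}$}; note that $x^{\ast}\in S_{\Phi}$ implies $x^{\ast}$ minimizes $U$ on $S_{\Phi}$, so then $p^{\ast}=x^{\ast}$ and moreover $\nabla\Phi(x^{\ast})=\nabla U(x^{\ast})=0$. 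Set $h(t)=\frac{1}{2}\Vert x(t)-p\Vert^{2}$; taking the inner product of (GAVD$_{\gamma,\varepsilon}$) with $x(t)-p$, using $\langle x''(t),x(t)-p\rangle=h''(t)-\Vert x'(t)\Vert^{2}$, convexity of $\Phi$ (so $\langle\nabla\Phi(x),x-p\rangle\ge\Phi(x)-\Phi^{\ast}\ge0$) and strong convexity of $U$ (so $\langle\nabla U(x),x-p\rangle\ge2\mu h+\langle\nabla U(p),x-p\rangle$), one obtains
\begin{equation}
h''(t)+\gamma(t)h'(t)+\bigl(\Phi(x(t))-\Phi^{\ast}\bigr)+2\mu\varepsilon(t)h(t)+\varepsilon(t)\langle\nabla U(p),x(t)-p\rangle\le\Vert x'(t)\Vert^{2}.\label{anchored}
\end{equation}

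For \textbf{Q$_{1}$}, the assumption $x'(t)=\circ(1/t^{\theta})$ together with $\gamma(t)=\alpha/t^{\theta}$ gives $\Vert x'(t)\Vert^{2}/\gamma(t)=\alpha^{-1}t^{\theta}\Vert x'(t)\Vert^{2}\to0$, and the only term in (\ref{anchored}) without a favourable sign is $\varepsilon(t)\langle\nabla U(p^{\ast}),x(t)-p^{\ast}\rangle$. Once $x(t)$ is known to be bounded, weak lower semicontinuity of $\Phi$ and $\Phi(x(t))\to\Phi^{\ast}$ (Theorem \ref{General}) force every weak cluster point of $x(t)$ into $S_{\Phi}$, whence the characterization of $p^{\ast}$ gives $\liminf_{t\to+\infty}\langle\nabla U(p^{\ast}),x(t)-p^{\ast}\rangle\ge0$, i.e. $\langle\nabla U(p^{\ast}),x(t)-p^{\ast}\rangle\ge-\delta(t)$ with $\delta(t)\to0$. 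Discarding the nonnegative term $\Phi(x(t))-\Phi^{\ast}$, (\ref{anchored}) becomes $h''+\gamma h'+2\mu\varepsilon h\le\Vert x'\Vert^{2}+\varepsilon\delta$, whose forcing splits as $f_{1}=\Vert x'\Vert^{2}$ with $f_{1}/\gamma\to0$ and $f_{2}=\varepsilon\delta$ with $f_{2}/(2\mu\varepsilon)=\delta/(2\mu)\to0$. An auxiliary lemma on the scalar inequality $w\ge0$, $w''+\gamma w'+aw\le f_{1}+f_{2}$ under $\int a/\gamma=+\infty$, $f_{1}/\gamma\to0$, $f_{2}/a\to0$ (here $a=2\mu\varepsilon$, so $\int a/\gamma=+\infty$) then yields $h(t)\to0$, that is $x(t)\to p^{\ast}$ strongly. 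The boundedness of $x(t)$ used above is extracted beforehand from (\ref{anchored}) itself: bounding $|\langle\nabla U(p^{\ast}),x-p^{\ast}\rangle|\le\Vert\nabla U(p^{\ast})\Vert\sqrt{2h}$, absorbing a fraction into $2\mu\varepsilon h$, and exploiting $\int_{t_{0}}^{+\infty}t^{\theta}\Vert x'\Vert^{2}\,dt<+\infty$ via the elementary bound $\frac{d}{dt}\sqrt{2h(t)}\le\Vert x'(t)\Vert$.

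For \textbf{Q$_{2}$} one has $\nabla\Phi(x^{\ast})=\nabla U(x^{\ast})=0$, so the last term of (\ref{anchored}) vanishes and the defect issue of \textbf{Q$_{1}$} is absent. It then suffices to verify the hypotheses $x'(t)=\circ(1/t^{\theta})$ and $\int_{t_{0}}^{+\infty}t^{\theta}\Vert x'\Vert^{2}\,dt<+\infty$ of \textbf{Q$_{1}$}, together with the rate (\ref{rateofconvergence}); this is carried out as in the proof of the analogous part of Theorem \ref{Weak}, specialized to $x^{\ast}\in S_{\Phi}\cap S_{U}$. One introduces a Lyapunov functional of the form
\[
\mathcal{E}(t)=t^{1+\theta}W(t)+\frac{1}{2}\bigl\Vert\lambda\,(x(t)-x^{\ast})+t^{\theta}x'(t)\bigr\Vert^{2}+\xi\,\Vert x(t)-x^{\ast}\Vert^{2}
\]
for suitable constants $\lambda,\xi>0$, and—using $\gamma(t)=\alpha/t^{\theta}$ exactly, $\varepsilon$ nonincreasing, $\nabla\Phi(x^{\ast})=\nabla U(x^{\ast})=0$, and, when $\theta=1$, the hypothesis $\alpha>3$, which is precisely what gives the correct sign of the dominant term of $\mathcal{E}'$—one proves $\mathcal{E}'(t)\le r(t)$ with $r\in L^{1}(t_{0},+\infty)$. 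Then $\mathcal{E}$ is bounded, yielding at once $W(t)=O(1/t^{1+\theta})$, $\int_{t_{0}}^{+\infty}t^{\theta}W(t)\,dt<+\infty$, $\int_{t_{0}}^{+\infty}t^{\theta}\Vert x'\Vert^{2}\,dt<+\infty$, and boundedness of $x(t)$; a routine refinement (the limit of $t^{1+\theta}W(t)$ exists and must vanish since $\int t^{\theta}W<+\infty$) upgrades $O$ to $\circ$ in (\ref{rateofconvergence}), and $W(t)=\circ(1/t^{1+\theta})$ gives $\Vert x'(t)\Vert=\circ(1/t^{(1+\theta)/2})=\circ(1/t^{\theta})$. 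Now \textbf{Q$_{1}$} applies with $p^{\ast}=x^{\ast}$ and gives $x(t)\to x^{\ast}$ strongly.

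The main obstacle is the design and verification of the Lyapunov functional $\mathcal{E}$ in \textbf{Q$_{2}$}: choosing the weights ($t^{1+\theta}$ in front of $W$, $t^{\theta}$ in front of $x'$ inside the square, and the precise $\lambda,\xi$) so that the positive contributions produced by $\frac{d}{dt}(t^{1+\theta}W)$ and by the cross term are compensated, the computation being genuinely different in the two regimes $0\le\theta<1$ and $\theta=1$—the latter being exactly where the criticality of $\alpha=3$ for (AVD$_{\alpha,\varepsilon}$) resurfaces. A second delicate point is the auxiliary scalar differential inequality with a \emph{vanishing} restoring coefficient $2\mu\varepsilon(t)$, where the passage from $\liminf w=0$ to $\lim w=0$ has to be handled (the integrating-factor argument using $\int a/\gamma=+\infty$, combined with the monotonicity control $\frac{d}{dt}\sqrt{2w}\le\Vert x'\Vert$). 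Finally, in \textbf{Q$_{1}$}, some care is needed to make the a priori boundedness of $x(t)$ rigorous—it is not a consequence of the decay hypotheses on $x'$ alone, since $\Vert x'\Vert\notin L^{1}(t_{0},+\infty)$ in general—and to establish $\delta(t)\to0$, for which the weak-cluster-point argument above is the natural route.
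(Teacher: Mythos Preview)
Your outline is workable but differs from the paper's proof of \textbf{Q$_{1}$} in one key respect. The paper does not use the variational characterization $\langle\nabla U(p^{\ast}),z-p^{\ast}\rangle\ge0$ on $S_{\Phi}$ together with weak cluster points of $x(t)$; instead it introduces the Tikhonov minimizers $x_{\varepsilon(t)}=\arg\min\Phi_{t}$, where $\Phi_{t}=\Phi+\varepsilon(t)U$, and routes the anchored inequality through $\Phi_{t}(x(t))\ge\Phi_{t}(x_{\varepsilon(t)})$ to obtain
\[
h''+\gamma h'+\varepsilon(t)\bigl(U(x_{\varepsilon(t)})-U(p^{\ast})+m\,h\bigr)\le\Vert x'\Vert^{2}.
\]
The point is that the ``defect'' $U(x_{\varepsilon(t)})-U(p^{\ast})$ depends only on $\varepsilon(t)$, not on $x(t)$, and tends to $0$ by the classical Tikhonov argument; no a~priori boundedness of $x(t)$ is required. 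With this in hand, the paper runs exactly the two-step scheme you sketch---first $\liminf h=0$ by contradiction with $\int t^{\theta}\varepsilon=+\infty$, then the crossing-level argument for $\limsup h=0$---but only once. Your route needs boundedness of $x(t)$ to make $\delta(t)\to0$, and the extraction of boundedness you propose (Young's inequality to absorb $\varepsilon\langle\nabla U(p^{\ast}),x-p^{\ast}\rangle$ into $\mu\varepsilon h$, leaving a remainder $C\varepsilon$) essentially forces you to run the same liminf/crossing-level machinery a first time on the shifted quantity $h-C/\mu$ to get $\limsup h\le C/\mu$, and then a second time on $h$ itself; the hint ``$\frac{d}{dt}\sqrt{2h}\le\Vert x'\Vert$'' does not by itself give boundedness since $\Vert x'\Vert\notin L^{1}$. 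So both approaches land on the same scalar mechanism, but the Tikhonov-minimizer device buys a cleaner single pass.

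For \textbf{Q$_{2}$} the paper's route is shorter than yours: since $x^{\ast}\in S_{\Phi}\cap S_{U}$ one has $r_{x^{\ast}}(t)=\varepsilon(t)(U(x^{\ast})-U(x(t)))\le0$, so Lemma~\ref{Lemma} applies directly (no new Lyapunov functional $\mathcal{E}$ is built) and yields (\ref{rateofconvergence}); this gives $\Vert x'(t)\Vert^{2}\le2W(t)=\circ(1/t^{1+\theta})$ and $\int t^{\theta}\Vert x'\Vert^{2}\le2\int t^{\theta}W<+\infty$, hence the hypotheses of \textbf{Q$_{1}$} hold and $x(t)\to p^{\ast}=x^{\ast}$. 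Your combined Lyapunov functional would reprove Lemma~\ref{Lemma} in this special case, which is correct but redundant given the paper's structure.
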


\begin{remark}
In the case $\gamma(t)=\gamma$ is constant (which correspond the case
$\theta=0$), combining Theorem \ref{General} and the assertion (Q$_{1}$) of
the Theorem \ref{Strong} yields a generalization of the strong convergence
result of Attouch and Cazernicki [Theorem 2.3, \cite{AC}].
\end{remark}

\section{A general minimization property of (GAVD$_{\gamma,\varepsilon}$)}

This section is devoted to the proof of Theorem \ref{General} which is
inspired from the arguments of [Theorem 3.1, \cite{HJ}] and [Theorem 2.1, \cite{JM15}]. Notice that the assumption (\text{H}$_{2}$) can be excluded.

\begin{proof}
Differentiating the energy function $W$ and using the equation (GAVD$_{\gamma
,\varepsilon}$), we obtain%
\begin{align}
W^{\prime}(t)  &  =-\gamma(t)\left\Vert x^{\prime}(t)\right\Vert
^{2}+\varepsilon^{\prime}(t)(U(x(t))-U^{\ast})\label{B1}\\
&  \leq-\gamma(t)\left\Vert x^{\prime}(t)\right\Vert ^{2}.\nonumber
\end{align}
Hence $W(t)$ is decreasing and approaching to some nonnegative real number $W_{\infty}$ as
$t\rightarrow+\infty$. Moreover, we have%
\begin{equation}
\int_{t_{0}}^{+\infty}\gamma(t)\left\Vert x^{\prime}(t)\right\Vert
^{2}dt<\infty. \label{in}%
\end{equation}
To conclude, we just have to show that $W_{\infty}\leq0.$ Let $v$ be an
arbitrarily element of $\mathcal{H}$. Consider the function%
\[
h_{v}(t)\equiv\frac{1}{2}\left\Vert x(t)-v\right\Vert ^{2}.
\]
Using the equation (GAVD$_{\gamma,\varepsilon}$) and the convexity of $\Phi$
and $U,$ one can easily check that%
\begin{align}
h_{v}^{\prime\prime}(t)+\gamma(t)h_{v}^{\prime}(t)  &  =\left\Vert x^{\prime
}(t)\right\Vert ^{2}+\langle\nabla\Phi(x(t)),v-x(t)\rangle+\varepsilon
(t)\langle\nabla U(x(t)),v-x(t)\rangle\nonumber\\
&  \leq\left\Vert x^{\prime}(t)\right\Vert ^{2}+\Phi(v)-\Phi(x(t))+\varepsilon
(t)(U(v)-U(x(t)))\nonumber\\
&  =\frac{3}{2}\left\Vert x^{\prime}(t)\right\Vert ^{2}-W(t)+\Phi
(v)-\Phi^{\ast}+\varepsilon(t)(U(v)-U^{\ast}). \label{A}%
\end{align}
Recalling that $W(t)\geq W_{\infty},$ we get%
\[
A_{\infty}\leq-h_{v}^{\prime\prime}(t)-\gamma(t)h_{v}^{\prime}(t)+\frac{3}%
{2}\left\Vert x^{\prime}(t)\right\Vert ^{2}+\varepsilon(t)(U(v)-U^{\ast})
\]
where $\displaystyle A_{\infty}=W_{\infty}+\Phi^{\ast}-\Phi(v).$ \\
Integrating the last
inequality over $[t_{0},t]$  and using the fact that $\displaystyle\gamma
h_{v}\geq0$ and the assumption $\displaystyle\gamma(t)\leq\frac{K_{2}}{t^{2}},$ we find%
\begin{equation}
(t-t_{0})A_{\infty}\leq h_{v}^{\prime}(t_{0})-\gamma(t_{0})h_{v}(t_{0}%
)+h_{v}^{\prime}(t)+\frac{3}{2}\int_{t_{0}}^{t}\left\Vert x^{\prime
}(s)\right\Vert ^{2}ds+\int_{t_{0}}^{t}f_{v}(s)ds, \label{in1}%
\end{equation}
where $f_{v}(s)=\varepsilon(s)(U(v)-U^{\ast})+\frac{K_{2}}{s^{2}}h_{v}(s).$

From (\ref{in}), we deduce that $\displaystyle\int_{t_{0}}^{+\infty}\frac{\left\Vert
x^{\prime}(s)\right\Vert ^{2}}{s}ds<\infty$ which implies (see [Lemma
3.2, \cite{HJ}) that
\begin{equation}
\int_{t_{0}}^{t}\left\Vert x^{\prime}(s)\right\Vert ^{2}ds=\circ(t). \label{A1}%
\end{equation}
Using now the Cauchy-Schwartz inequality, we infer
\begin{align}\displaystyle
\left\Vert x(t)\right\Vert  &  \leq\left\Vert x(t_{0})\right\Vert
+\sqrt{t-t_{0}}\left(\int_{t_{0}}^{t}\left\Vert x^{\prime}(s)\right\Vert
^{2}ds\right)^{\frac{1}{2}}\nonumber\\
&=\circ(t)\label{o1}.
\end{align}
Therefore $\displaystyle\lim_{t\rightarrow+\infty}f_{v}(t)=0$ and as a consequence
\begin{equation}
\int_{t_{0}}^{t}f_{v}(s)ds=\circ(t). \label{A2}%
\end{equation}
Recalling now that since $W~$is bounded, $x^{\prime}$ is also bounded. Thus,
we get by (\ref{o1})
\begin{equation}
h_{v}^{\prime}(t)=2\langle x^{\prime}(t),x(t)-v\rangle=\circ(t). \label{A3}%
\end{equation}
Finally, dividing the inequality (\ref{in1}) by $t$, using the estimates
(\ref{A1}), (\ref{A2}), (\ref{A3}) and letting $t\rightarrow+\infty$, we
obtain $A_{\infty}\leq0,$ which implies that $W_{\infty}\leq\Phi
(v)-\Phi^{\ast}.$ Since this holds for every $v\in\mathcal{H},$ the required
result $W_{\infty}\leq0$ follows.
\end{proof}
\begin{remark}
  Let us notice that, if $S_\Phi$ is empty, any solution $x(.)$ of the (GAVD$_{\gamma,\varepsilon}$) system is unbounded. Indeed, if else, there exists a sequence $(t_n)_n$ tending to $+\infty$ so that $(x(t_n))_n$ converges weakly to an element $\bar{x}\in\mathcal{H}$. From the lower semi-continuity property it follows that $$\Phi(\bar{x})\leq\underset{n\to+\infty}{\liminf} \Phi(x(t_n)),$$
  which means that $\Phi(\bar{x})\leq \Phi^*$. This contradicts that  $S_\Phi=\emptyset.$
\end{remark}
\section{Weak convergence properties of (GAVD$_{\gamma,\varepsilon}$)}

In this section, we give a proof of Theorem \ref{Weak} which relies on the
classical Opial's lemma and the following important lemma which will be also
useful in the study of the strong convergence properties of the trajectories
of (GAVD$_{\gamma,\varepsilon}$) in the next section.

\begin{lemma}
\label{Lemma}Assume that the function $\gamma(.)$ satisfies the
assumption (\ref{assump}) in Theorem \ref{Weak}. Let $x(.)$ be a classical
solution of (GAVD$_{\gamma,\varepsilon}$) and let $v\in S_\Phi$ such that $[t^{\theta}r_{v}(t)]^{+}$  belongs to $L^{1}(t_{0},+\infty;\mathbb{R})$ where $r_{v}(t)\equiv \varepsilon(t)(U(v)-U(x(t))$.
Then the distance function $h_{v}(t)\equiv\frac{1}{2}\left\Vert x(t)-v\right\Vert ^{2}$
converges as $t\rightarrow+\infty$ and the energy function $W$ satisfies the
asymptotic property (\ref{rateofconvergence}).
\end{lemma}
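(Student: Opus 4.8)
The plan is to study the standard anchored function $h_v(t)=\tfrac12\|x(t)-v\|^2$ for $v\in S_\Phi$ together with the energy $W$, and to extract from the dynamics a differential inequality of the type $h_v''(t)+\gamma(t)h_v'(t)+c\,t^{\theta}W(t)\le (\text{integrable/controllable terms})$, from which both the convergence of $h_v$ and the rate $W(t)=\circ(1/t^{1+\theta})$ will follow by the ODE-with-weight argument used for the Nesterov system. First I would compute, exactly as in the proof of Theorem \ref{General}, that
\[
h_v''(t)+\gamma(t)h_v'(t)\le \tfrac32\|x'(t)\|^2-W(t)+r_v(t),
\]
using $v\in S_\Phi$ so that $\Phi(v)-\Phi^\ast=0$ and writing $r_v(t)=\varepsilon(t)(U(v)-U(x(t)))$; here I also use $W(t)=\tfrac12\|x'(t)\|^2+\Phi(x(t))-\Phi^\ast+\varepsilon(t)(U(x(t))-U^\ast)$ to replace $\Phi(x(t))-\Phi^\ast$ and absorb the $U$-term.

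Next I would introduce the auxiliary function adapted to the weight $t^{\theta}$ — something like $E(t)=t^{2\theta}W(t)+\tfrac12\|\,\lambda t^{\theta-?}(x(t)-v)+t^{?}x'(t)\,\|^2 + (\text{correction})$, in the spirit of the Lyapunov functionals in \cite{ACR}, \cite{Ma17}; the precise powers are dictated by the requirement $\gamma(t)\ge \alpha/t^{\theta}$ and, when $\theta=1$, by $\alpha>3$. Differentiating $E$, using (GAVD$_{\gamma,\varepsilon}$), the convexity inequality $\langle\nabla\Phi(x(t)),x(t)-v\rangle\ge \Phi(x(t))-\Phi^\ast\ge 0$, and the hypothesis $\int_{t_0}^{+\infty}[(t^{\theta}\gamma(t))']^+\,dt<+\infty$ to control the terms coming from $\gamma$ not being exactly $\alpha/t^{\theta}$, I expect to obtain $E'(t)\le t^{\theta}|r_v(t)|\cdot(\text{bounded})+g(t)$ with $g\in L^1$, or more precisely an inequality showing $E$ is bounded above. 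Combined with the hypothesis $[t^{\theta}r_v]^+\in L^1(t_0,+\infty)$ this gives $\sup_t E(t)<+\infty$, hence $t^{2\theta}W(t)$ bounded, hence in particular $\int_{t_0}^{+\infty}t^{\theta}W(t)\,dt<+\infty$ after one more integration of the differential inequality for $h_v$ (the $\tfrac32\|x'\|^2$ term is integrable against any weight $\le\gamma$ by \eqref{in}, and $\int t^{\theta}|r_v|<\infty$ by hypothesis).

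From $\int_{t_0}^{+\infty}t^{\theta}W(t)\,dt<+\infty$ and the almost-monotonicity $W'(t)\le -\gamma(t)\|x'(t)\|^2+\varepsilon'(t)(U(x(t))-U^\ast)\le 0$ (using $\varepsilon$ nonincreasing and $U\ge U^\ast$), one deduces $W(t)=\circ(1/t^{1+\theta})$ by the classical argument: if $t^{1+\theta}W(t)$ did not tend to $0$ along some sequence, the monotonicity of $W$ would force $\int t^{\theta}W=+\infty$. Finally, for the convergence of $h_v$: integrating $h_v''+\gamma h_v'\le \tfrac32\|x'\|^2-W+r_v$ and using that the right-hand side is in $L^1$ together with the lower bound $\gamma(t)\ge\alpha/t^{\theta}$, one shows (via the substitution that turns $h_v''+\gamma h_v'$ into $(\mu h_v')'/\mu$ with $\mu(t)=\exp\int^t\gamma$, or more simply via a Gronwall-type estimate exploiting $\gamma t^{\theta}\to$ a positive constant) that $h_v'\in L^1$, whence $h_v$ has a limit.

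The main obstacle I anticipate is the construction of the correct Lyapunov functional $E$ and the bookkeeping of the powers of $t$ when $\theta<1$ versus $\theta=1$: in the borderline case $\theta=1$ the constant $\alpha>3$ must enter precisely (as in the Nesterov analysis), while for $\theta<1$ the damping $\gamma(t)\sim\alpha/t^{\theta}$ is "overcritical" and the estimates should be more forgiving; unifying the two and handling the error terms generated by $[(t^{\theta}\gamma(t))']^+$ cleanly is the delicate point. Everything else — the convexity inequalities, the integrability \eqref{in} from Theorem \ref{General}, and the passage from $\int t^{\theta}W<\infty$ to the rate — is routine.
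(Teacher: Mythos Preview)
Your outline has the right architecture (anchor inequality for $h_v$, energy decay, then pass to the rate and to $\lim h_v$), and your starting inequality
\[
h_v''(t)+\gamma(t)h_v'(t)\le \tfrac32\|x'(t)\|^2-W(t)+r_v(t)
\]
is exactly the one the paper uses. But two concrete points do not close as written.

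\emph{Wrong weight on $W$.} Your Lyapunov carries $t^{2\theta}W(t)$; the target rate is $W(t)=\circ(1/t^{1+\theta})$. These agree only at $\theta=1$. For $\theta<1$ a bound on $t^{2\theta}W$ is strictly weaker than what is needed, and it does \emph{not} give $\int t^{\theta}W<\infty$ (since $\int t^{\theta}\cdot t^{-2\theta}\,dt=\int t^{-\theta}\,dt=+\infty$). So the mixed-norm functional, if you insist on one, must have weight $t^{1+\theta}$ on the energy part; getting the cross terms to cancel with that weight for general $\theta$ is exactly the ``bookkeeping'' you flag, and it is not automatic.

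\emph{Integrability of $t^{\theta}\|x'\|^2$.} You write that ``the $\tfrac32\|x'\|^2$ term is integrable against any weight $\le\gamma$ by \eqref{in}''. But $t^{\theta}$ is \emph{not} dominated by $\gamma(t)$: under the assumption $\gamma(t)\ge \alpha/t^{\theta}$ you only have $t^{\theta}\gamma(t)\ge\alpha$, i.e.\ $t^{\theta}$ is of the order of $1/\gamma(t)$, not of $\gamma(t)$. So \eqref{in} alone does not give $\int t^{\theta}\|x'\|^2<\infty$; this is precisely the missing ingredient both for $\int t^{\theta}W<\infty$ and for your final step ``$h_v'\in L^1$''.

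The paper avoids the mixed-norm Lyapunov altogether and closes both gaps in one stroke. Set $\lambda(t)=t^{1+\theta}$. From $W'\le -\gamma\|x'\|^2$ and $\gamma(t)\ge K/t$ (with $K>3$, $K=\alpha$ if $\theta=1$) one gets
\[
(\lambda W)'\le \lambda'W-\tfrac{K}{2}\lambda'\|x'\|^2,
\qquad\text{hence}\qquad
\tfrac32\lambda'\|x'\|^2\le \tfrac{3}{K}\lambda'W-\tfrac{3}{K}(\lambda W)'.
\]
Multiply the $h_v$-inequality by $\lambda'(t)=(1+\theta)t^{\theta}$ and substitute the line above to eliminate $\|x'\|^2$; after one integration by parts you obtain an inequality of the form
\[
\Bigl(1-\tfrac{3}{K}\Bigr)\!\int_{t_1}^{t}\!\lambda'W+\tfrac{3}{K}\lambda(t)W(t)
\;\le\; C_0-\lambda'(t)h_v'(t)+(\lambda''-\lambda'\gamma)(t)h_v(t)+\int_{t_1}^{t} g_\theta\,h_v,
\]
with $g_\theta=[(\lambda'\gamma)']^{+}-\lambda'''\in L^1$ by the hypothesis $\int[(t^{\theta}\gamma)']^{+}<\infty$. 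Using $|h_v'|\le 2\sqrt{W}\sqrt{h_v}$ and $(\lambda''-\lambda'\gamma)\le -A(\theta)-\mu(\theta)<0$ (here is where $\alpha>3$ enters when $\theta=1$), the boundary terms are absorbed into $\lambda W$ and $-\mu(\theta)h_v$, and Gronwall on $h_v$ gives simultaneously: $h_v$ bounded, $\sup_t \lambda(t)W(t)<\infty$, and $\int \lambda'W<\infty$. Monotonicity of $W$ then upgrades $\int t^{\theta}W<\infty$ to $t^{1+\theta}W(t)\to 0$. Finally, since now $t^{\theta}\|x'(t)\|^2\le 2t^{\theta}W(t)\in L^1$, the inequality $h_v''+\gamma h_v'\le \tfrac32\|x'\|^2+[r_v]^+$ has an $L^1(t^{\theta}\,dt)$ right-hand side, and the integrating-factor lemma (Lemma~\ref{lemma}) yields $[h_v']^+\in L^1$, hence $\lim_{t\to\infty}h_v(t)$ exists.

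In short: replace your Lyapunov construction by the ``multiply by $\lambda'=(1+\theta)t^{\theta}$ and trade $\|x'\|^2$ for $W$ via the energy identity'' trick; this is what produces the correct weight $t^{1+\theta}$ and the otherwise-unavailable bound $\int t^{\theta}\|x'\|^2<\infty$.
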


\begin{proof}
First, we notice
that up to take $t_{1}$ large enough we can assume that%
\[
\gamma(t)\geq\frac{K}{t}\text{ for every }t\geq t_{1}%
\]
with $K>3$ and $K=\alpha$ if $\theta=1.$

Let $\lambda(t)=t^{1+\theta}$. Using (\ref{B1}) and the above inequality, we
find%
\begin{align}
(\lambda W)^{\prime} &  \leq\lambda^{\prime}W-\lambda\gamma\left\Vert
x^{\prime}\right\Vert ^{2}\nonumber\\
&  \leq\lambda^{\prime}W-\frac{K}{1+\theta}\lambda^{\prime}\left\Vert
x^{\prime}\right\Vert ^{2}\nonumber\\
&  \leq\lambda^{\prime}W-\frac{K}{2}\lambda^{\prime}\left\Vert x^{\prime
}\right\Vert ^{2}.\label{E}%
\end{align}
Therefore,%
\[
\frac{3}{2}\lambda^{\prime}\left\Vert x^{\prime}\right\Vert ^{2}\leq\frac
{3}{K}\lambda^{\prime}W-\frac{3}{K}(\lambda W)^{\prime}.
\]
Multiplying (\ref{A}) by $\lambda^{\prime}(t)$ (we recall that, since
$v\in S_\Phi,$ $\Phi(v)=\Phi^{\ast}$) and using the above inequality, we
obtain%
\[\displaystyle
(1-\frac{3}{K})\lambda^{\prime}W+\frac{3}{K}(\lambda W)^{\prime}\leq
-\lambda^{\prime}h_{v}^{\prime\prime}-\lambda^{\prime}\gamma h_{v}^{\prime
}+\lambda^{\prime}[r_{v}]^{+}.
\]
Integrating this inequality from $t_{1}$ to $t$ we get
\begin{equation}\displaystyle{\small
(1-\frac{3}{K})\int_{t_{1}}^{t}\lambda^{\prime}(s)W(s)ds+\frac{3}{K}%
\lambda(t)W(t)\leq C_{0}-\lambda^{\prime}(t)h_{v}^{\prime}(t)+(\lambda
^{\prime\prime}-\lambda^{\prime}\gamma)(t)h_{v}(t)+\int_{t_{1}}^{t}%
g_{\theta}(s)h_{v}(s)ds,\label{B2}}
\end{equation}
where
\[\displaystyle
C_{0}=\lambda^{\prime}(t_{1})h_{v}^{\prime}(t_{1})-\lambda^{\prime\prime
}(t_{1})h_{v}(t_{1})+\frac{3}{K}\lambda(t_1)W(t_1)+\lambda^{\prime}(t_1)h_{v}^{\prime}(t_1)+\int_{t_{1}}^{+\infty}\lambda^{\prime}(s)[r_{v}(s)]^{+}ds
\]
and
\begin{equation}
g_{\theta}(t)=[(\lambda^{\prime}\gamma)^{\prime}]^{+}(t)-\lambda
^{\prime\prime\prime}(t).\label{G}%
\end{equation}
Let $A(\theta)$ and $\mu(\theta)>0$ be two positive constants such that $A(\theta)+\mu(\theta
)<(\theta+1)\alpha$ if $\theta<1$ and $A(\theta)+\mu(\theta)=2(\alpha-1)$ if
$\theta=1.$ Since
\[
(\lambda^{\prime\prime}-\lambda^{\prime}\gamma)(t)\leq(1+\theta)(\theta
t^{\theta-1}-\alpha),
\]
we can assume, up to take $t_{1}$ large enough in the case $\theta<1$, that
\begin{equation}
(\lambda^{\prime\prime}-\lambda^{\prime}\gamma)(t)\leq-A(\theta)-\mu
(\theta)\text{ }\forall t\geq t_{1}.\label{H}%
\end{equation}
Using now the fact that
\begin{align*}
\left\vert h_{v}^{\prime}(t)\right\vert  &  \leq\left\Vert x^{\prime
}(t)\right\Vert \left\Vert x(t)-v\right\Vert \\
&  \leq2\sqrt{W(t)}\sqrt{h_{v}(t)},
\end{align*}
it follows, from the estimate (\ref{H}) and the elementary inequality%
\[
bx-ax^{2}\leq\frac{b^{2}}{4a}\ \ \ \forall a>0,\ (x,b)\in\mathbb{R}^{2},
\]
that for every $t\geq t_{1}$%
\begin{align}
-\lambda^{\prime}(t)h_{v}^{\prime}(t)+(\lambda^{\prime\prime}-\lambda^{\prime
}\gamma)(t)h_{v}(t) &  \leq\frac{(\lambda^{\prime}(t))^{2}W(t)}{A(\theta)}%
-\mu(\theta)h_{v}(t)\nonumber\\
&  =B(\theta,t)\lambda(t)W(t)-\mu(\theta)h_{v}(t)\label{H2},%
\end{align}
where%
\[
B(\theta,t)=\frac{(\theta+1)^{2}t^{\theta-1}}{A(\theta)}.
\]
Inserting (\ref{H2}) in the inequality (\ref{B2}), we obtain%
\begin{equation}
(1-\frac{3}{K})\int_{t_{1}}^{t}\lambda^{\prime}(s)W(s)ds+(\frac{3}{K}%
-B(\theta,t))\lambda(t)W(t)+\mu(\theta)h_{v}(t)\leq C_{0}+\int_{t_{1}}%
^{t}g_{\theta}(s)h_{v}(s)ds\label{H3}%
\end{equation}
Let us notice that if $0\leq \theta<1$ then $\displaystyle\lim_{t\to+\infty}B(\theta,t)=0$ and in the case where $\theta=1$, since $\alpha>3$, one can choose $\displaystyle 0<\mu(1)<\frac{2}{3}(\alpha-3)$ to get
\begin{eqnarray*}
% \nonumber % Remove numbering (before each equation)
\frac{3}{K}-B(1,t) &=&\frac{3}{\alpha}- \frac{4}{A(1)}>0.
\end{eqnarray*}
Hence, up to take $t_{1}$ large enough we assume that, for every $0 \leq\theta\leq 1$, there exists a constant $\nu(\theta)>0$ such that%
\[
\frac{3}{K}-B(\theta,t)\geq\nu(\theta), \mbox{ for all } t\geq t_1.
\]
Recalling that the function $g_{\theta}$ is integrable over $[t_{1}%
,+\infty)$ and applying the Gronwall lemma to the inequality (\ref{H3}),
we deduce that the function $h_{v}$ is bounded and as a consequence we get%
\begin{align}
\sup_{t\geq t_{1}}\lambda(t)W(t) &  <+\infty\nonumber
\end{align}
and
\begin{align}
\int_{t_{1}}^{+\infty}\lambda^{\prime}(s)W(s)ds &  <+\infty.\label{Er}%
\end{align}
Now, using the fact that the energy function $W$ is decreasing, we deduce from
(\ref{Er}) that $t^{1+\theta}W(t)\rightarrow0$ as $t\rightarrow+\infty$ in
fact for every $t\geq t_{1}$ we have $$\displaystyle (1+\theta)\left(  \frac{t}{2}\right)
^{1+\theta}W(t)\leq\int_{\frac{t}{2}}^{t}\lambda^{\prime}(s)W(s)ds.$$ To
conclude, it remains to prove that $\displaystyle\lim_{t\rightarrow+\infty}h_{v}(t)$ exists.
From (\ref{A}), the function $h_{v}$ satisfies the differential inequality%
\[\displaystyle
h_{v}^{\prime\prime}(t)+\gamma(t)h_{v}^{\prime}(t)\leq\zeta(t)
\]
where $\zeta(t)=\frac{3}{2}\left\Vert x^{\prime}(t)\right\Vert ^{2}%
+[r_{v}(t)]^{+}.$ The assumption on the function $r_{v}$ and the estimate
(\ref{Er}) imply that $t^{\theta}\zeta(t)\in L^{1}(a,+\infty;\mathbb{R}^{+}),$
then the existence of $\displaystyle\lim_{t\rightarrow+\infty}h_{v}(t)$ follows from the
following lemma.
\end{proof}

\begin{lemma}
\label{lemma}Let $a>0$ and $w:[a,+\infty)\rightarrow\mathbb{R}^{+}$ be a
continuous function satisfying%
\[
w(t)\geq\frac{\alpha}{t^{\theta}}~\forall t\geq a
\]
where $\alpha$ and $\theta$ are nonnegative constants with $0\leq\theta\leq1$
and $\alpha>1$ if $\theta=1.$ Let $\varphi\in C^{2}(a,+\infty;\mathbb{R}^{+})$
satisfy the differential inequality%
\begin{equation}
\varphi^{\prime\prime}(t)+w(t)\varphi^{\prime}(t)\leq\psi(t) \label{Ine}%
\end{equation}
with $t^{\theta}\psi(t)\in L^{1}(a,+\infty;\mathbb{R}^{+}).$ Then
$\displaystyle\lim_{t\rightarrow+\infty}\varphi(t)$ exists.
\end{lemma}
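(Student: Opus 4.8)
The plan is to reduce the second-order differential inequality to an integrability statement for the first derivative and then invoke the classical fact that a nonnegative function whose derivative has integrable positive part converges. Concretely, I would introduce the integrating factor $\Gamma(t)=\exp\big(\int_a^t w(s)\,ds\big)$ — well defined, $C^1$, positive and increasing since $w>0$ — so that multiplying (\ref{Ine}) by $\Gamma$ gives $(\Gamma\varphi')'\leq\Gamma\psi$. Integrating from $a$ to $t$ and using $\Gamma(a)=1$ and $\psi\geq 0$, one gets $[\Gamma(t)\varphi'(t)]^+\leq[\varphi'(a)]^+ + \int_a^t\Gamma(s)\psi(s)\,ds$, hence
$$[\varphi'(t)]^+\leq\frac{[\varphi'(a)]^+}{\Gamma(t)}+\frac{1}{\Gamma(t)}\int_a^t\Gamma(s)\psi(s)\,ds .$$
Integrating this over $[a,+\infty)$ and applying Tonelli to the resulting double integral, the target $\int_a^{+\infty}[\varphi'(t)]^+\,dt<+\infty$ follows from $s^\theta\psi\in L^1$ together with the two elementary estimates
$$\int_a^{+\infty}\frac{dt}{\Gamma(t)}<+\infty,\qquad \Gamma(s)\int_s^{+\infty}\frac{dt}{\Gamma(t)}\leq C\,s^\theta\ \ (s\geq a).$$

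These two estimates are proved using only the hypothesis $w(t)\geq\alpha t^{-\theta}$, which for $a\leq s\leq t$ gives $\Gamma(t)/\Gamma(s)\geq\exp\big(\alpha\int_s^t r^{-\theta}\,dr\big)$; this equals $(t/s)^\alpha$ when $\theta=1$ and $\exp\big(\tfrac{\alpha}{1-\theta}(t^{1-\theta}-s^{1-\theta})\big)$ when $\theta<1$. In the case $\theta=1$ (where $\alpha>1$), $\Gamma(s)\int_s^{+\infty}\Gamma(t)^{-1}\,dt\leq s^\alpha\int_s^{+\infty}t^{-\alpha}\,dt=s/(\alpha-1)$, which is the second estimate, and $s=a$ gives the first. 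In the case $\theta<1$, the substitution $u=t^{1-\theta}-s^{1-\theta}$ turns $\Gamma(s)\int_s^{+\infty}\Gamma(t)^{-1}\,dt$ into a constant times $\int_0^{+\infty}e^{-\frac{\alpha}{1-\theta}u}(u+s^{1-\theta})^{\theta/(1-\theta)}\,du$, which is $\lesssim s^\theta$ after bounding $(u+s^{1-\theta})^{\theta/(1-\theta)}\lesssim u^{\theta/(1-\theta)}+s^\theta$ and using $s\geq a$; again $s=a$ yields integrability of $1/\Gamma$. (The degenerate case $\theta=0$ is the same computation with $e^{-\alpha(t-s)}$.)

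Finally, with $L:=\int_a^{+\infty}[\varphi'(s)]^+\,ds<+\infty$ in hand, the function $\rho(t):=\varphi(t)-\int_a^t[\varphi'(s)]^+\,ds$ satisfies $\rho'=-[\varphi']^-\leq 0$, so it is nonincreasing, and $\rho(t)\geq\varphi(t)-L\geq -L$ is bounded below; hence $\rho$ has a limit, and since $\int_a^t[\varphi']^+$ also tends to $L$, $\varphi(t)=\rho(t)+\int_a^t[\varphi']^+$ converges. The one genuinely delicate point is the uniform-in-$s$ bound $\Gamma(s)\int_s^{+\infty}\Gamma(t)^{-1}\,dt\leq C s^\theta$, which is exactly where the exponent $\theta$ and the sharp restriction $\alpha>1$ for $\theta=1$ (needed to make $t^{-\alpha}$ integrable near $+\infty$) enter; everything else is routine.
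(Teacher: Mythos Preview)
Your proof is correct and follows essentially the same route as the paper's: introduce the integrating factor to obtain $\varphi'(t)\leq e^{-\Gamma(t,a)}\varphi'(a)+\int_a^t e^{-\Gamma(t,s)}\psi(s)\,ds$ (in the paper's notation $e^{-\Gamma(t,s)}=\Gamma(s)/\Gamma(t)$ in yours), establish the key bound $\int_s^{+\infty}e^{-\Gamma(t,s)}\,dt\leq M\,s^\theta$, apply Fubini/Tonelli to deduce $[\varphi']^+\in L^1$, and conclude that the limit exists. The only difference is that you work out the bound $\int_s^{+\infty}e^{-\Gamma(t,s)}\,dt\leq M\,s^\theta$ and the implication ``$[\varphi']^+\in L^1\Rightarrow\lim\varphi$ exists'' explicitly, whereas the paper refers the former to \cite{CF} and states the latter without detail.
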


\begin{proof}
From (\ref{Ine}), we have for every $t\geq a$
\begin{equation}
\varphi^{\prime}(t)\leq e^{-\Gamma(t,a)}\varphi^{\prime}(a)+\int_{a}%
^{t}e^{-\Gamma(t,s)}\psi(s)ds; \label{In}%
\end{equation}
where%
\[
\Gamma(t,s)=\int_{s}^{t} w(\tau)d\tau.
\]
Similarly to as in the proof of [Lemma 3.14,\cite{CF}], one can easily check
that
\[\displaystyle
\int_{s}^{+\infty}e^{-\Gamma(t,s)}dt\leq M~s^{\theta}~\forall s\geq a,
\]
where $M>0$ is an absolute constant. We deduce from (\ref{In}) and
Fubini's Theorem that the positive part $[\varphi^{\prime}]^{+}$ of
$\varphi^{\prime}$ belongs to $L^{1}(a,+\infty;\mathbb{R}^{+})$ which implies
that $\displaystyle\lim_{t\rightarrow+\infty}\varphi(t)$ exists.
\end{proof}

Before starting the proof of Theorem \ref{Weak}, let us recall the classical
Opial's lemma.

\begin{lemma}
[Opial's lemma]Let $x:[t_{0},+\infty)\rightarrow\mathcal{H}.$ Assume
that there exists a nonempty subset $S$ of $\mathcal{H}$ such that:

\begin{enumerate}
\item[i)] if $t_{n}\rightarrow+\infty$ and $x(t_{n})\rightharpoonup x$ weakly
in $\mathcal{H}$ , then $x\in S$,

\item[ii)] for every $z\in S,$ $\displaystyle\lim_{t\rightarrow+\infty}\left\Vert
x(t)-z\right\Vert $ exists.
\end{enumerate}

\noindent Then there exists $z_{\infty}\in S$ such that $x(t)\rightharpoonup
z_{\infty}$ weakly in $\mathcal{H}$ as $t\rightarrow+\infty.$
\end{lemma}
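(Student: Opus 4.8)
The statement to be proved is the classical Opial lemma, so the plan is to run the standard argument in three stages. First I would use hypothesis (ii) to get boundedness of the trajectory: fixing any $z_{0}\in S$ (possible since $S\neq\varnothing$), the function $t\mapsto\left\Vert x(t)-z_{0}\right\Vert$ converges as $t\to+\infty$, hence is bounded, and therefore $\sup_{t\geq t_{0}}\left\Vert x(t)\right\Vert<+\infty$. Since $\mathcal{H}$ is a Hilbert space, it is reflexive, so from this bounded family one can extract a sequence $t_{n}\to+\infty$ along which $x(t_{n})\rightharpoonup z_{\infty}$ weakly for some $z_{\infty}\in\mathcal{H}$; by hypothesis (i), $z_{\infty}\in S$. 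This produces the candidate limit.

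The core of the proof is to show that $z_{\infty}$ is the \emph{unique} weak sequential cluster point of $x(t)$. Suppose $s_{n}\to+\infty$ and $x(s_{n})\rightharpoonup\bar{z}$ weakly; by (i) again $\bar{z}\in S$. For the fixed pair $z_{\infty},\bar{z}\in S$ I would expand, for each $t$,
\[
\tfrac{1}{2}\left\Vert x(t)-z_{\infty}\right\Vert ^{2}-\tfrac{1}{2}\left\Vert x(t)-\bar{z}\right\Vert ^{2}=\langle x(t),\bar{z}-z_{\infty}\rangle+\tfrac{1}{2}\left\Vert z_{\infty}\right\Vert ^{2}-\tfrac{1}{2}\left\Vert \bar{z}\right\Vert ^{2}.
\]
By (ii) both norms on the left-hand side converge as $t\to+\infty$, hence the scalar function $t\mapsto\langle x(t),\bar{z}-z_{\infty}\rangle$ has a limit $\ell\in\mathbb{R}$. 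Passing to this limit along $t_{n}$ (weak convergence allows passage to the limit in the scalar product against the fixed vector $\bar{z}-z_{\infty}$) gives $\ell=\langle z_{\infty},\bar{z}-z_{\infty}\rangle$, while passing along $s_{n}$ gives $\ell=\langle\bar{z},\bar{z}-z_{\infty}\rangle$. Subtracting yields $\left\Vert \bar{z}-z_{\infty}\right\Vert ^{2}=0$, so $\bar{z}=z_{\infty}$.

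Finally I would upgrade "unique weak cluster point" to "weak convergence": if $x(t)$ did not converge weakly to $z_{\infty}$, there would exist $\xi\in\mathcal{H}$, $\eta>0$ and a sequence $t_{n}\to+\infty$ with $\left\vert \langle x(t_{n})-z_{\infty},\xi\rangle\right\vert \geq\eta$ for all $n$; boundedness of $(x(t_{n}))_{n}$ together with reflexivity allows extraction of a weakly convergent subsequence whose limit lies in $S$ by (i) and differs from $z_{\infty}$, contradicting the uniqueness just established. Hence $x(t)\rightharpoonup z_{\infty}$ as $t\to+\infty$. The whole argument is elementary; the only mildly delicate point is this last passage from a unique weak cluster point to genuine weak convergence, which relies on the boundedness of the trajectory and reflexivity of $\mathcal{H}$, both already in hand, so no real obstacle is expected.
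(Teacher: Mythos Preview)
Your argument is correct and is precisely the standard proof of Opial's lemma: boundedness from (ii), existence of a weak cluster point by reflexivity, uniqueness via the polarization identity and (ii), and the routine upgrade from ``unique weak cluster point'' to weak convergence. The paper itself does not prove this lemma; it merely states it and refers the reader to \cite{Op}, so there is no in-paper proof to compare against.
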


For a simple proof of Opial's lemma, we refer the reader to \cite{Op}.

\begin{proof}
[Proof of Theorem \ref{Weak}]\textbf{Step 1}: Proof of the property (P$_{1}$).
Since $r_{v}(t)\leq\varepsilon(t)(U(v)-U^{\ast}),$ then, according to Lemma
\ref{Lemma}, $\displaystyle\lim_{t\rightarrow+\infty}h_{v}(t)$ exists for every $v\in
S_\Phi$ and the energy function $W$ satisfies (\ref{rateofconvergence}%
)$.$ Let $t_{n}\rightarrow+\infty$ such that $x(t_{n})$ converges weakly in
$\mathcal{H}$\ to some $\bar{x}.$ Since $\Phi(x(t))\rightarrow\Phi^{\ast}$ as
$t\rightarrow+\infty,$ the weak lower semi-continuity of $\Phi$ implies that
$\Phi(\bar{x})\leq\Phi^{\ast}$ which means that $\bar{x}\in S_\Phi.$  By Opial's lemma,
we deduce that $x(t)$ converges weakly in
$\mathcal{H}$\ as $t\rightarrow+\infty$ to some element of $S_\Phi.$

\textbf{Step2:} Proof of the property (P$_{2}$). Let $v\in S=S_\Phi\cap S_U.$ Since $r_{v}$ is  nonpositive, then Lemma \ref{Lemma} implies that
$\displaystyle\lim_{t\rightarrow+\infty}h_{v}(t)$ exists and $W$ satisfies
(\ref{rateofconvergence})$.$ Thus, in view of the assumption $\displaystyle\liminf_{t\rightarrow+\infty
}t^{\theta+1}\varepsilon(t)>0$, we have $U(x(t))\rightarrow U^{\ast}$ as
$t\rightarrow+\infty.$ Therefore the lower semi-continuity of $\Phi$ and $U$
gives, as in the above step, that every sequential weak cluster point of
$x(t),~$as $t\rightarrow+\infty,$ belongs to the subset $S.$ This completes
the proof of the property (P$_{2}$) due to Opial's lemma.
\end{proof}

\section{Strong convergence properties of (GAVD$_{\gamma,\varepsilon}$)}

This section is devoted to the proof of Theorem \ref{Strong}. Before Proving
separably the two properties (Q$_{1}$) and (Q$_{2}$), let us first recall some
general facts about strongly convex functions and the Tikhonov approximation
method \cite{TA}. The function $U$ is strongly convex then there exists a positive real
$m$ such that $U(x)-\frac{m}{2}\left\Vert x\right\Vert ^{2}$ is convex
(we say that $U$ is $m-$strongly convex). Moreover, for every nonempty, convex
and closed subset $C$ of $\mathcal{H}$, the function $U$ has a unique
minimizer $x_{C}^{\ast}$ on $C.$ Let $x^{\ast}$ be the minimizer on
$\mathcal{H}$ and $p^{\ast}$ its minimizer on $S_\Phi.$ For every $t\geq
t_{0},$ we consider the function $\Phi_{t}$ defined on $\mathcal{H}$ by
\[
\Phi_{t}(x)=\Phi(x)+\varepsilon(t)U(x).
\]
Clearly, $\Phi_{t}$ is $\varepsilon(t)m$-strongly convex. Therefore, $\Phi
_{t}$ satisfies the convex inequality
\begin{equation}
\Phi_{t}(z)\geq\Phi_{t}(y)+\langle\nabla\Phi_{t}(y),z-y\rangle+\frac{m}%
{2}\varepsilon(t)\left\Vert z-y\right\Vert ^{2}, \label{Cineq}%
\end{equation}
and has a unique global minimizer which we denote by $x_{\varepsilon(t)}.$
Adopting the Tikhonov method, we can prove that $x_{\varepsilon(t)}$
converges strongly to $p^{\ast}$ as $t\rightarrow+\infty.$ Indeed, since%
\begin{equation}
\Phi_{t}(x_{\varepsilon(t)})\leq\Phi_{t}(p^{\ast}) \label{T1}%
\end{equation}
and%
\[
\Phi(p^{\ast})\leq\Phi(x_{\varepsilon(t)}),
\]
then
\begin{equation}
U(x_{\varepsilon(t)})\leq U(p^{\ast}). \label{T2}%
\end{equation}
Furthermore, seeing that $U$ is coercive, the last inequality implies that $(x_{\varepsilon
(t)})_{t\geq t_{0}}$ is bounded. So, let
$\tilde{x}\in\mathcal{H}$ \ be a weak limit of a sequence $(x_{\varepsilon
(t_{n})})$ where $t_{n}\rightarrow+\infty$. Using the weak lower
semi-continuity of the two convex functions $\Phi$ and $U$ and letting
$t=t_{n}\rightarrow+\infty$ in the inequalities (\ref{T1}) and (\ref{T2}), we
deduce that $\Phi(\tilde{x})\leq\Phi(p^{\ast})$ and $U(\tilde{x})\leq
U(p^{\ast})$ which is, from the definition of $p^{\ast},$ is equivalent to
$\tilde{x}=p^{\ast}.$ Consequently, we infer that $x_{\varepsilon(t)}$
converges weakly to $p^{\ast}$ as $t_{n}\rightarrow+\infty$. Now, for the reason that $U$ is
$m-$strongly convex, we have
\[
U(x_{\varepsilon(t)})\geq U(p^{\ast})+\langle\nabla U(p^{\ast}),x_{\varepsilon
(t)}-p^{\ast}\rangle+\frac{m}{2}\left\Vert x_{\varepsilon(t)}-p^{\ast
}\right\Vert ^{2}.
\]
Hence, by (\ref{T2}), we deduce that
$\displaystyle\lim_{t\rightarrow+\infty}\left\Vert x_{\varepsilon(t)}-p^{\ast}\right\Vert
=0$ which completes the proof of the claim.

\begin{proof}
[Proof of Theorem \ref{Strong}]Let us first prove the property (Q$_{1}$). We
consider the function $h(t)=h_{p^{\ast}}(t)=\frac{1}{2}\left\Vert
x(t)-p^{\ast}\right\Vert ^{2}.$ Using the equation (GAVD$\gamma,\varepsilon$)
and the convex inequality (\ref{Cineq}) we obtain%
\begin{align}
h^{\prime\prime}(t)+\gamma(t)h^{\prime}(t) &  =\left\Vert x^{\prime
}(t)\right\Vert ^{2}+\langle\nabla\Phi_{t}(x(t)),p^{\ast}-x(t)\rangle
\nonumber\\
&  \leq\left\Vert x^{\prime}(t)\right\Vert ^{2}+\Phi_{t}(p^{\ast})-\Phi
_{t}(x(t))-m~\varepsilon(t)h(t)\nonumber\\
&  \leq\left\Vert x^{\prime}(t)\right\Vert ^{2}+\Phi_{t}(p^{\ast})-\Phi
_{t}(x_{\varepsilon(t)})-m~\varepsilon(t)h(t)\nonumber\\
&  \leq\left\Vert x^{\prime}(t)\right\Vert ^{2}+\varepsilon(t)(U(p^{\ast
})-U(x_{\varepsilon(t)}))-m~\varepsilon(t)h(t).\label{Lineq}%
\end{align}
In the last inequality we have used the fact that $p^{\ast}$ is also a
minimizer of $\Phi.$ Set
\[
\sigma(t)\equiv U(x_{\varepsilon(t)})-U(p^{\ast})+m~h(t).
\]
The inequality (\ref{Lineq}) becomes%
\begin{equation}
h^{\prime\prime}(t)+\gamma(t)h^{\prime}(t)+\varepsilon(t)\sigma(t)\leq
\left\Vert x^{\prime}(t)\right\Vert ^{2}.\label{Pineq}%
\end{equation}
Let us prove that $\displaystyle\liminf_{t\rightarrow+\infty}h(t)=0.$ We argue by
contradiction. As consequence of $$\displaystyle\lim_{t\rightarrow+\infty}U(x_{\varepsilon
(t)})-U(p^{\ast})=0,$$ there exists $t_{2}\geq t_{0}$ large enough and $\mu>0$
such that $\sigma(t)\geq\mu$ for every $t\geq t_{2}.$ Therefore the
differential inequality (\ref{Pineq}) implies that, for every $t\geq t_{2}$, we
have%
\[
h(t)+\mu\int_{t_{2}}^{t}\int_{t_{2}}^{\tau}e^{-\Gamma(\tau,s)}\varepsilon
(s)dsd\tau\leq h(t_{2})+\int_{t_{2}}^{t}e^{-\Gamma(\tau,t_{2})}d\tau
h^{\prime}(t_{2})+\int_{t_{2}}^{t}\int_{t_{2}}^{\tau}e^{-\Gamma(\tau
,s)}\left\Vert x^{\prime}(s)\right\Vert ^{2}dsd\tau,
\]
where%
\[
\Gamma(t,s)=\int_{s}^{t}\gamma(\tau)d\tau.
\]
Applying Fubini's theorem, we then infer that%
\begin{equation}
\mu\int_{t_{2}}^{+\infty}\varepsilon(s)\int_{s}^{+\infty}e^{-\Gamma(\tau
,s)}d\tau ds\leq h(t_{2})+\left\vert h^{\prime}(t_{2})\right\vert \int_{t_{2}%
}^{+\infty}e^{-\Gamma(\tau,t_{2})}d\tau+\int_{t_{2}}^{+\infty}\left\Vert
x^{\prime}(s)\right\Vert ^{2}\int_{s}^{+\infty}e^{-\Gamma(\tau,s)}d\tau
ds.\label{Rineq}%
\end{equation}
A simple computation ensures the existence of two real constants $B_{\theta
}>A_{\theta}>0$ so that%
\[
A_{\theta}~s^{\theta}\leq\int_{s}^{+\infty}e^{-\Gamma(\tau,s)}d\tau\leq
B_{\theta}~s^{\theta}.
\]
Hence, combining the inequality (\ref{Rineq}) and the assumption $\displaystyle\int_{t_{0}}^{+\infty
}s^{\theta}\left\Vert x^{\prime}(s)\right\Vert ^{2}ds<+\infty$, we get
$\displaystyle\int_{t_{0}}^{+\infty}s^{\theta}\varepsilon(s)ds<+\infty,$ a contradiction.
Consequently
\begin{equation}
\liminf_{t\rightarrow+\infty}h(t)=0.\label{J1}%
\end{equation}
Now let us suppose that
\begin{equation}
\limsup_{t\rightarrow+\infty}h(t)>0.\label{J2}%
\end{equation}
The continuity of the function
$h$ combined with (\ref{J1}) and (\ref{J2}) ensure the existence of two real
numbers $\lambda<\delta$ and two positive real sequences $(s_{n})_{n}$ and
$(t_{n})_{n}$ such that for every $n\in\mathbb{N}$ we have
\begin{align*}
\max\{t_{\ast},n\} &  <s_{n}<t_{n},\\
h(t_{n}) &  =\delta,\\
h(s_{n}) &  =\lambda,\\
h(s) &  \in\lbrack\lambda,\delta]\text{ on }[s_{n},t_{n}],
\end{align*}
where $t_{\ast}>t_{2}$ is a fixed positive number such that $U(x_{\varepsilon
(t)})-U(p^{\ast})\geq-m\lambda$  for all $t\geq t_{\ast}$ (for more details see [Theorem 5.1 \cite{JM10}]). We deduce from (\ref{Pineq}) that for
every $n\in\mathbb{N}$ and for all $t\in\lbrack s_{n},t_{n}]$ %
\[
h^{\prime\prime}(t)+\frac{\alpha}{t^{\theta}}h^{\prime}(t)\leq\left\Vert
x^{\prime}(t)\right\Vert ^{2}.%
\]
Multiplying the last differential
inequality by $t^{\theta}$ and integrating over $[s_{n},t_{n}],$ we obtain
\begin{equation}
t_{n}^{\theta}h^{\prime}(t_{n})-s_{n}^{\theta}h^{\prime}(s_{n})+\theta
s_{n}^{\theta-1}\lambda-\theta t_{n}^{\theta-1}\delta+\alpha(\delta
-\lambda)+\theta(\theta-1)\int_{s_{n}}^{t_{n}}t^{\theta-2}h(t)dt\leq
\int_{s_{n}}^{t_{n}}t^{\theta}\left\Vert x^{\prime}(t)\right\Vert
^{2}.\label{Chineq}%
\end{equation}
Using now the facts%
\begin{align*}
\left\vert h^{\prime}(t_{n})\right\vert  &  \leq\left\Vert x^{\prime}%
(t_{n})\right\Vert \sqrt{2h(t_{n})}=\left\Vert x^{\prime}(t_{n})\right\Vert
\sqrt{2\delta},\\
\left\vert h^{\prime}(s_{n})\right\vert  &  \leq\left\Vert x^{\prime}%
(s_{n})\right\Vert \sqrt{2\lambda},\\
\int_{s_{n}}^{t_{n}}t^{\theta-2}h(t)dt &  \leq\delta\frac{s_{n}^{\theta-1}%
}{1-\theta}\text{ if }0\leq\theta<1,
\end{align*}
and letting $n$ goes to $+\infty$ in the the inequality (\ref{Chineq}), we
get
\begin{align*}
(\alpha-1)(\delta-\lambda) &  \leq0\text{ if }\theta=1,\\
\alpha(\delta-\lambda) &  \leq0\text{ if }0\leq\theta<1.
\end{align*}
This contradicts the assumption $\delta>\lambda.$ We therefore conclude that
$\displaystyle\lim_{t\rightarrow+\infty}h(t)=0,$ which completes the proof of the property
(Q$_{1}$).
\end{proof}
\section{Numerical Experiments}
\textbf{Acknowledgement:} The authors are grateful to the Deanship of Scientific Research at King Faisal University for financially and morally supporting this work under Project 170065​.

\end{document}